\newtheorem{theorem}{Theorem}[section]
\newtheorem{lemma}[theorem]{Lemma}
\newtheorem{corollary}[theorem]{Corollary}
\newtheorem{proposition}[theorem]{Proposition}
\newtheorem*{conjecture*}{Conjecture}
\newtheorem*{claim*}{Claim}
\newtheorem*{theorem*}{Theorem}
\theoremstyle{remark}
\theoremstyle{definition}
\newtheorem{definition}[theorem]{Definition}
\newtheorem{notation}[theorem]{Notation}
\newtheorem{example}[theorem]{Example}
\newcommand{\A}{\mathcal{A}}
\newcommand{\Z}{\mathbb{Z}}
\newcommand{\N}{\mathbb{N}}
\newcommand{\Aut}{{\rm Aut}}
\newcommand{\Hom}{{\rm Hom}}
\newcommand{\NLE}{{\Upsilon}}
\title[The automorphism group of a shift of linear growth]{The automorphism group of a shift of linear growth: beyond transitivity}
\author{Van Cyr}
\address{Bucknell University, Lewisburg, PA 17837 USA}
\email{van.cyr@bucknell.edu}
\author{Bryna Kra}
\address{Northwestern University, Evanston, IL 60208 USA}
\email{kra@math.northwestern.edu}
\subjclass[2010]{}
\keywords{}
\subjclass[2010]{37B50 (primary), 68R15, 37B10}
\keywords{subshift, automorphism, block complexity}
\thanks{The  second author was partially supported by NSF grant.}
\begin{document}

\begin{abstract}
For a finite alphabet $\mathcal{A}$ and shift $X\subseteq\mathcal{A}^{\mathbb{Z}}$ whose factor complexity function grows at most linearly, we study the algebraic properties of the automorphism group $\Aut(X)$.  For
such systems, we show that every finitely generated subgroup of $\Aut(X)$ is virtually $\Z^d$, in contrast to the behavior
when the complexity function grows more quickly. 
With additional dynamical assumptions we show more: if $X$ is transitive, then $\Aut(X)$ is virtually $\Z$; if $X$ has dense aperiodic points, then $\Aut(X)$ is virtually $\Z^d$.  We also classify all finite groups that arise as the automorphism group of a shift.
\end{abstract}

\maketitle

\section{Introduction}
Given a finite alphabet $\A$, a shift  system $(X,\sigma)$ is a closed set $X\subseteq\A^\Z$ that is invariant under the left shift $\sigma\colon 
\A^\Z\to\A^\Z$ and its automorphism group $\Aut(X)$ 
is the group of homeomorphisms of $X$ that commute with $\sigma$ (these notions are made precise in Section~\ref{sec:notation}).  For general shift systems, while $\Aut(X)$ is countable, it can be quite complicated: for the full shift~\cite{Hedlund2} or for mixing 
shifts of finite type~\cite{BLR}, $\Aut(X)$ is not finitely generated and is not amenable 
(see also~\cite{BK, KRW, FF, ward, hochman}).  
The assumption of topological mixing 
can be used to construct a rich collection of 
subgroups of the automorphism group.  For 
example, the automorphism group contains isomorphic 
copes of all finite groups, the direct sum of countably many copies of $\Z$, and the free group on two generators.
In these examples, the topological entropy is positive, and 
the complexity function $P_X(n)$,
which counts the number of nonempty cylinder sets of length $n$ taken 
over all elements $x\in X$, grows quickly.  

When the complexity function of a shift system grows slowly, the automorphism group is often much simpler and the main goal of this paper is to study the algebraic properties of $\Aut(X)$ in this setting.   
In contrast to mixing shifts, we study general shifts of low complexity, without an assumption of minimality or transitivity.  
We show that the automorphism group of any shift of low complexity is amenable, yet its behavior can still be 
be quite complicated.

As $P_X(n)$ is non-decreasing, boundedness is the slowest possible growth property that $P_X(n)$ can have.  As 
expected, this case is simple: the Morse-Hedlund Theorem~\cite{MH} implies that if there exists $n\in\N$ such that $P_X(n)\leq n$, then $X$ is comprised entirely of periodic points.  Thus $\Aut(X)$ is a finite group (and we classify all finite groups that arise in this way in Section~\ref{sec:periodic2}).  It follows that  if $(X,\sigma)$ is a shift for which $P_X(n)/n\xrightarrow{n\to\infty}0$, then $|\Aut(X)|<\infty$.

It is thus natural to study shifts for which $P_X(n) > n$ for all $n\in\N$.  
The first nontrivial growth rate that such a system can have is linear, by which we mean 
$$
0<\limsup_{n\to\infty}\frac{P_X(n)}{n}<\infty.  
$$
In previous work~\cite{CK3}, we studied the algebraic properties of $\Aut(X)$ for transitive shifts of subquadratic growth and showed that $\Aut(X)/\langle\sigma\rangle$ is a periodic group.  In particular, this holds for transitive shifts of linear growth.  Periodic groups, however, can be quite complicated: for example, a periodic group need not be finitely generated, and there are finitely generated, nonamenable periodic groups.  In this paper, we study $\Aut(X)$ for general (not necessarily transitive) shifts of linear growth.  In the transitive case, we prove a stronger result than is implied by~\cite{CK3}, 
showing that $\Aut(X)/\langle\sigma\rangle$ is finite.  
However, the main novelty of this work is that our techniques remain valid even without the assumption of transitivity.

Depending on dynamical assumptions on the system, shift systems with linear growth exhibit different behavior.  Our most general result is: 
\begin{theorem}\label{thm:main}
Suppose $(X,\sigma)$ is a shift system for which there exists $k\in\N$ such that 
$$
\limsup_{n\to\infty}P_X(n)/n<k.  
$$
Then every finitely generated subgroup of $\Aut(X)$ is virtually $\Z^d$ for some $d<k$. 
\end{theorem}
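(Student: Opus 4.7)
The plan is to show that every finitely generated subgroup $G=\langle\phi_1,\dots,\phi_m\rangle$ of $\Aut(X)$ admits a finite-index subgroup $G_0$ together with a homomorphism $\Phi\colon G_0\to\Z^{k-1}$ whose kernel is finite. Once this is in place, $G_0/\ker\Phi$ embeds in $\Z^{k-1}$ and is therefore isomorphic to $\Z^d$ for some $d\le k-1<k$; standard facts about finite-by-abelian groups then give that $G$ itself is virtually $\Z^d$.

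First, I would invoke the Curtis--Hedlund--Lyndon theorem to realize each $\phi_i$ and $\phi_i^{-1}$ as a sliding block code of a common range $R$. Next, I would exploit the complexity hypothesis: since $\limsup_{n\to\infty}P_X(n)/n<k$, the first difference satisfies $P_X(n+1)-P_X(n)\le k-1$ for infinitely many $n$, so at such scales $n\gg R$ the Rauzy graph of $X$ carries at most $k-1$ right-special vertices. Each generator $\phi_i$, being a range-$R$ block code, must permute these right-special factors (up to a bounded translation by powers of $\sigma$), and passing to the kernel of this finite permutation action gives a finite-index subgroup $G_0\le G$ on which every element fixes each right-special factor of length $n$. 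For such $\phi\in G_0$, I would attach an integer displacement $d_i(\phi)$ in each of the $k-1$ non-branching ``corridors'' between right-special factors, recording how $\phi$ translates blocks within that corridor relative to $\sigma$. A direct check shows that $\phi\mapsto(d_1(\phi),\dots,d_{k-1}(\phi))$ is a group homomorphism $\Phi\colon G_0\to\Z^{k-1}$, so in particular $G_0$ is abelian modulo $\ker\Phi$. Finiteness of $\ker\Phi$ follows because any element with all displacements equal to zero must coincide with the identity on arbitrarily long runs inside every orbit, and the low-complexity constraint permits only finitely many such automorphisms.

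The main obstacle is the displacement step in the \emph{non-transitive} setting. In the transitive case treated in~\cite{CK3}, a single dense orbit essentially determines the displacement and the resulting abelian quotient is cyclic, so the construction reduces to watching one corridor. Without transitivity, $X$ may decompose into many invariant subsystems whose local behavior is genuinely different, and the displacements of $\phi$ on different corridors need not agree a priori. The technical heart of the proof is to show, using the complexity bound at scale $n$ to couple the corridors globally, that these independent displacements assemble coherently into a single $\Z^{k-1}$-valued homomorphism, and that the strict inequality $\limsup_{n\to\infty}P_X(n)/n<k$ yields the strict rank bound $d<k$ rather than merely $d\le k$.
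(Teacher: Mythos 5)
Your high-level target (a finite-index subgroup of $G$ mapping to $\Z^{k-1}$ with controlled kernel) matches the shape of the paper's conclusion, but the mechanism you propose has two genuine gaps. First, the claim that each generator ``must permute the right-special factors (up to a bounded translation by powers of $\sigma$)'' is not justified and is not true in the form you need: the image of a right-special word of length $n$ under a range-$R$ block code is a word of length $n-2R$ that need not be right-special at all. The paper's Proposition~\ref{prop:polynomial} deliberately avoids any such permutation action; it only shows that if $w$ fails to extend uniquely to the right and $\varphi,\varphi^{-1}\in\Aut_R(X)$, then $\varphi(w)$ fails to extend uniquely $R+1$ times to the right, and then \emph{counts} such words via Corollary~\ref{corollary:extend}. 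Relatedly, your bound of $k-1$ right-special factors holds only along a subsequence of lengths $n$, so your corridor displacements would have to be shown to be well-defined and coherent across scales; this is precisely the step you label ``the technical heart'' and leave unproved. The paper replaces it by a different decomposition: Lemma~\ref{lemma:k-transitive} covers the closure $X_{NP}$ of the aperiodic points by at most $k-1$ orbit closures, Lemma~\ref{lemma:aperiodic-finite} shows the images of the base points of these orbits lie in finitely many orbits, and Lemma~\ref{lemma:rank} embeds the orbit-preserving automorphisms into $\Z^{d}$ with $d<k$ via $\varphi(x_i)=\sigma^{e_i(\varphi)}(x_i)$ --- with trivial kernel, not merely finite kernel.

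Second, and more seriously, your argument never engages with the periodic part of $X$, which is exactly where transitivity fails in an essential way and where finite generation must be used beyond fixing a common range $R$. The example in Section~\ref{sec:not-global} shows that a shift of linear growth can have infinitely many isolated periodic orbits and a non-finitely-generated automorphism group, with infinitely many automorphisms acting trivially on everything except a single isolated periodic orbit; all of these would have ``all displacements equal to zero'' in any corridor scheme attached to the aperiodic part, so your claim that the kernel of $\Phi$ is finite fails without further input. The paper's proof handles this by showing (Lemma~\ref{lemma:per-finite}) that the set $Y$ of points seeing only words of length $2R+1$ absent from $X_{NP}$ is finite, hence its orbit $Z$ under the finitely generated subgroup is finite, and that the restriction of an automorphism to $X_{NP}$ determines it on $X\setminus Z$; passing to the finite-index subgroup acting trivially on $Z$ and orbit-preservingly on $X_{NP}$ then gives the desired $\Z^d$. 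Without some version of this step your kernel-finiteness claim, and hence the whole reduction, does not go through.
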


Let $[\sigma]$ denote the full group of a shift $(X,\sigma)$ (see Section~\ref{sec:full-group} for the definition). 
With the additional assumption that $(X,\sigma$) has a dense set of aperiodic points, we have: 
\begin{theorem}
\label{th:finitely-generated}
Suppose $(X,\sigma)$ is a shift system for which there exists $k\in\N$ such that 
$$
\limsup_{n\to\infty}P_X(n)/n<k.  
$$
If $X$ has a dense set of aperiodic points, then $\Aut(X)\cap[\sigma]\cong\Z^d$ for some $d<k$ and $\Aut(X)/\Aut(X)\cap[\sigma]$ is finite.  In particular, $\Aut(X)$ is virtually $\Z^d$.
\end{theorem}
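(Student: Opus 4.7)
The plan is to establish the two structural statements separately: first, $\Aut(X) \cap [\sigma] \cong \Z^d$ for some $d < k$, and then, $\Aut(X)/(\Aut(X) \cap [\sigma])$ is finite. Together with normality of $\Aut(X) \cap [\sigma]$ in $\Aut(X)$ (routine: conjugating a shift-valued map by an automorphism produces another shift-valued map), these yield the ``virtually $\Z^d$'' conclusion.

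For the first claim, I would start by analyzing elements of $\Aut(X) \cap [\sigma]$. Any such $T$ has the form $T(x) = \sigma^{n_T(x)}(x)$ for an integer-valued function $n_T$, uniquely determined on aperiodic points. Since $T$ commutes with $\sigma$, $n_T$ is $\sigma$-invariant on the aperiodic part; since $T$ is continuous and aperiodic points are dense, $n_T$ is locally constant on $X$. From this, a direct computation gives $T_1 T_2 = T_2 T_1$ whenever both belong to $\Aut(X) \cap [\sigma]$, so the group is abelian; torsion-freeness follows because $T^m = \Id$ together with $\sigma$-invariance of $n_T$ forces $m\cdot n_T(x) = 0$ on each aperiodic orbit, hence $n_T \equiv 0$ by density. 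The assignment $T \mapsto n_T$ thus embeds $\Aut(X) \cap [\sigma]$ into the abelian group $C_\sigma(X, \Z)$ of continuous $\sigma$-invariant integer-valued functions on $X$.

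To bound the rank, I would examine the Boolean algebra $\mathcal{B}$ of clopen $\sigma$-invariant subsets of $X$. Any function in $C_\sigma(X, \Z)$ is constant on each atom of $\mathcal{B}$, so $C_\sigma(X, \Z) \cong \Z^\ell$ where $\ell$ is the number of atoms. I would then bound $\ell$ via the complexity hypothesis: each nonempty atom, being clopen, contains an aperiodic point by hypothesis, and the orbit closure of that point is a subshift of the atom with complexity at least $n+1$ by Morse--Hedlund. For $n$ large enough that the clopen partition into atoms is recognizable from length-$n$ windows, length-$n$ words from distinct atoms are disjoint, so $P_X(n) \geq \ell(n+1)$. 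The hypothesis $\limsup P_X(n)/n < k$ then forces $\ell < k$. Thus $\Aut(X) \cap [\sigma]$ embeds in $\Z^\ell$ and is itself free abelian of some rank $d \leq \ell < k$.

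The second claim, finiteness of $\Aut(X)/(\Aut(X) \cap [\sigma])$, is where I expect the main obstacle to lie. Theorem~\ref{thm:main} only asserts that finitely generated subgroups are virtually $\Z^{d'}$ with $d' < k$, which is purely local and a priori permits the full group to have an infinite, or at least unboundedly-torsion, quotient. My plan is as follows. First, $\Aut(X)$ permutes the $\ell$ atoms of $\mathcal{B}$, giving a homomorphism $\pi\colon\Aut(X) \to S_\ell$ whose image is finite; it suffices to show finiteness of $\ker(\pi)/(\ker(\pi) \cap [\sigma])$. Each $\phi \in \ker(\pi)$ preserves every atom, and since the functions $n_T$ defining elements of $\Aut(X) \cap [\sigma]$ are constant on atoms, $\phi$ commutes with every such $T$. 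Consequently, for any $\phi_1, \ldots, \phi_s \in \ker(\pi)$, the group $\langle T_1, \ldots, T_d, \phi_1, \ldots, \phi_s \rangle$ is abelian, and by Theorem~\ref{thm:main} has the form $\Z^{d''} \oplus F$ with $d'' < k$ and $F$ finite. The delicate step is to prove that the index $[\langle T_i, \phi_j \rangle : \langle T_i \rangle]$ is bounded uniformly in $s$ and the $\phi_j$'s; I expect this to require a torsion-elimination argument leveraging density of aperiodic points to conclude that any finite-order element of $\ker(\pi)$ acts trivially on a dense set of aperiodic orbits and is therefore the identity. Executing this step cleanly, most likely by adapting the transitive-case techniques of \cite{CK3} to each atom of $\mathcal{B}$, is where I expect the bulk of the work to lie.
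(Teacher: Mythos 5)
Your first half (the structure of $\Aut(X)\cap[\sigma]$) is essentially sound and runs parallel to the paper's Lemma~\ref{lemma:rank}, by a slightly different route: the paper takes $k-1$ aperiodic points $x_1,\dots,x_{k-1}$ with $\mathcal{O}(x_1)\cup\dots\cup\mathcal{O}(x_{k-1})$ dense (Lemma~\ref{lemma:k-transitive}) and embeds $\Aut(X)\cap[\sigma]$ into $\Z^{k-1}$ via $\varphi\mapsto(e_1(\varphi),\dots,e_{k-1}(\varphi))$ where $\varphi(x_i)=\sigma^{e_i(\varphi)}(x_i)$. Your count of atoms of the clopen $\sigma$-invariant Boolean algebra is a workable substitute, but note that you must first prove this algebra is finite, and that argument again reduces to the same complexity count that proves Lemma~\ref{lemma:k-transitive}.

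The second half contains a genuine gap, and it sits exactly where the paper does its main work. Two concrete problems. First, from ``every $\phi\in\ker(\pi)$ commutes with every $T\in\Aut(X)\cap[\sigma]$'' you conclude that $\langle T_1,\dots,T_d,\phi_1,\dots,\phi_s\rangle$ is abelian; this is a non sequitur, since nothing forces $\phi_1$ and $\phi_2$ to commute with each other. Second, the uniform index bound you defer is the whole theorem, and appealing to Theorem~\ref{thm:main} is circular in the paper's logical order (its proof passes through the lemmas that establish the present statement). The missing idea is a counting argument rather than a torsion-elimination argument. Using Cassaigne's theorem that $P_X(n+1)-P_X(n)$ is bounded, Proposition~\ref{prop:polynomial} shows that the number of $\varphi\in\Aut_R(X)$ with $\varphi^{-1}\in\Aut_R(X)$ is at most $\bigl(Bk(C+2)\bigr)^q(R+1)^q$, and in particular grows linearly in $R$ on each transitive piece. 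If $\{\varphi(x_i)\colon\varphi\in\Aut(X)\}$ met infinitely many distinct orbits, then composing $N$ such automorphisms with the powers $\sigma^j$, $-n\le j\le n$, would produce at least $2Nn+N$ distinct automorphisms of range $R(N)+n$, contradicting that linear bound as $N\to\infty$ (Lemma~\ref{lemma:transitive-finite-index}); a permutation-of-components argument upgrades this to the non-transitive setting (Lemma~\ref{lemma:aperiodic-finite}). Since an automorphism is determined by the images $\varphi(x_1),\dots,\varphi(x_q)$, and each of these lies in finitely many orbits, $\Aut(X)/\Aut(X)\cap[\sigma]$ is finite. Your proposed claim that finite-order elements of $\ker(\pi)$ must be the identity is both unproven and unnecessary once this orbit-counting step is in place.
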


For a shift $(X, \sigma)$, let $\langle\sigma\rangle$ denote the subgroup of $\Aut(X)$ generated by $\sigma$.  
With the additional assumption that $(X,\sigma)$ is topologically transitive, meaning there exists of a point whose orbit is dense in $X$, we show: 
\begin{theorem}
\label{thm:transitive}
Suppose $(X,\sigma)$ is a transitive shift system for which  
$$
0<\limsup_{n\to\infty}P_X(n)/n<\infty.  
$$
Then $\Aut(X)/\langle\sigma\rangle$ is finite.  In particular, $\Aut(X)$ is virtually $\mathbb{Z}$.
\end{theorem}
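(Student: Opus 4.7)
The plan is to bootstrap from Theorem~\ref{th:finitely-generated}, verifying that transitivity provides the dense aperiodic points it needs, and then to collapse the resulting $\Z^d$ onto $\langle\sigma\rangle$ by exploiting a single dense orbit.

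First I would note that $\limsup_{n\to\infty}P_X(n)/n>0$ forces $P_X$ to be unbounded, so $X$ must be infinite. A point $x_0\in X$ with dense orbit then has an infinite orbit, hence is aperiodic, and every $\sigma^k x_0$ is aperiodic as well. Thus $X$ has a dense set of aperiodic points. Choosing any $k\in\N$ with $k>\limsup P_X(n)/n$, Theorem~\ref{th:finitely-generated} applies and yields $\Aut(X)\cap[\sigma]\cong\Z^d$ for some $d<k$, together with the finiteness of the quotient $\Aut(X)/(\Aut(X)\cap[\sigma])$.

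The main step I anticipate is showing that transitivity forces $\Aut(X)\cap[\sigma]=\langle\sigma\rangle$. Given $\phi\in\Aut(X)\cap[\sigma]$, full-group membership gives $\phi(x_0)=\sigma^{n_0}(x_0)$ for an integer $n_0$ that is uniquely determined by aperiodicity of $x_0$. Set $\psi:=\phi\circ\sigma^{-n_0}\in\Aut(X)$; then $\psi(x_0)=x_0$, and since $\psi$ commutes with $\sigma$, propagation along the orbit yields $\psi(\sigma^k x_0)=\sigma^k x_0$ for every $k\in\Z$. So $\psi$ fixes a dense subset of $X$ pointwise; continuity of $\psi$ then forces $\psi=\mathrm{Id}$ on all of $X$, and $\phi=\sigma^{n_0}\in\langle\sigma\rangle$. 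The reverse inclusion $\langle\sigma\rangle\leq\Aut(X)\cap[\sigma]$ is automatic since $\sigma\in[\sigma]$.

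Combining these steps, $\Aut(X)/\langle\sigma\rangle=\Aut(X)/(\Aut(X)\cap[\sigma])$ is finite. Because $\sigma$ has infinite order on the infinite shift $X$, $\langle\sigma\rangle\cong\Z$ sits as a finite-index subgroup of $\Aut(X)$, so $\Aut(X)$ is virtually $\Z$. The real obstacle is packaged inside Theorem~\ref{th:finitely-generated}; granted that result, the additional argument here is purely topological, driven by the density of a single orbit and the continuity of elements of $\Aut(X)$.
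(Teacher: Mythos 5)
Your proof is correct, but it is organized differently from the paper's. The paper proves this theorem \emph{before} Theorem~\ref{th:finitely-generated}: it applies Lemma~\ref{lemma:transitive-finite-index} directly to conclude that $\{\varphi(x_0)\colon\varphi\in\Aut(X)\}$ meets only finitely many orbits, hence (since an automorphism of a transitive shift is determined by the image of a transitive point) $\Aut(X)/\Aut(X)\cap[\sigma]$ is finite, and then finishes with the same observation you make, that an orbit-preserving automorphism acts as a fixed power of $\sigma$ on a dense orbit and hence everywhere. You instead invoke the more general Theorem~\ref{th:finitely-generated} as a black box. This is legitimate and not circular --- the proof of Theorem~\ref{th:finitely-generated} rests on Lemmas~\ref{lemma:k-transitive}, \ref{lemma:transitive-finite-index}, \ref{lemma:aperiodic-finite}, and \ref{lemma:rank}, none of which uses Theorem~\ref{thm:transitive} --- but it means your argument secretly passes through the same Lemma~\ref{lemma:transitive-finite-index}, plus the extra machinery of Section~\ref{sec:dense-aperiodic} that the transitive case does not need. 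Two things you do that the paper leaves implicit are worth keeping: the verification that $\limsup P_X(n)/n>0$ forces $X$ to be infinite, hence the transitive point is aperiodic and $X$ has a dense set of aperiodic points (this is exactly what makes Theorem~\ref{th:finitely-generated} applicable, and also what guarantees $\langle\sigma\rangle\cong\Z$); and the careful continuity argument showing $\Aut(X)\cap[\sigma]=\langle\sigma\rangle$. The trade-off is that the paper's route is self-contained and shorter for this special case, while yours makes the logical dependence on the general theorem explicit at the cost of importing it.
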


For minimal shifts, meaning shifts such that every point has dense orbit, we show (note the growth condition 
on the complexity only assumes $\liminf$ instead of $\limsup$): 
\begin{theorem}
\label{theorem:minimal}
Suppose $(X,\sigma)$ is a minimal shift for which there exists $k\in\N$ satisfying 
$$
\liminf_{n\to\infty}P_X(n)/n<k.   
$$
Then $\Aut(X)/\langle\sigma\rangle$ is finite and $|\Aut(X)/\langle\sigma\rangle|<k$.
\end{theorem}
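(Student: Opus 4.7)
I would begin by reducing to the case that $X$ is infinite, since otherwise $\Aut(X)$ is finite and the conclusion is immediate. For infinite $X$, the Morse--Hedlund theorem gives $P_X(n) \geq n+1$, ruling out periodic points. Suppose for contradiction that $\Aut(X)/\langle\sigma\rangle$ contains at least $k$ cosets, represented by $\phi_1 = \Id, \phi_2, \ldots, \phi_k$. The first step is to use minimality to upgrade coset-distinctness to a pointwise condition: for each $i \neq j$ and each $\ell \in \Z$, the set $\{x \in X : \phi_i(x) = \sigma^\ell \phi_j(x)\}$ is closed and $\sigma$-invariant, hence by minimality either all of $X$ or empty. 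The first possibility would give $\phi_i = \sigma^\ell \phi_j$, contradicting the choice of representatives, so the second holds. Therefore for every $x \in X$, the points $\phi_1(x), \ldots, \phi_k(x)$ lie on $k$ distinct $\sigma$-orbits of $X$.

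Since a minimal shift is transitive, Theorem \ref{thm:transitive} already implies that $\Aut(X)/\langle\sigma\rangle$ is finite; write $N := |\Aut(X)/\langle\sigma\rangle|$, so the task reduces to showing $N < k$. Fix a common coding radius $r$ for $\phi_1^{\pm 1}, \ldots, \phi_N^{\pm 1}$. My plan is to establish the lower bound $P_X(n) \geq Nn - C$ for all sufficiently large $n$, where $C$ depends only on $N$ and $r$. Combined with the hypothesis $\liminf_{n\to\infty} P_X(n)/n < k$, this immediately forces $N < k$.

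The main obstacle is precisely this lower bound. A naive count of tuples $(\phi_i(x)|_{[-m,m]})_{i=1}^{N}$, which are determined by $x|_{[-m-r,\, m+r]}$ and hence number at most $P_X(2m+2r+1)$, does not yield a linear-in-$N$ bound: orbit-distinctness is a global condition and does not prevent two points on different orbits from agreeing on an arbitrarily long central window. The approach I would pursue instead is to work with the Rauzy graph of length-$n$ words of $X$ and exploit orbit-distinctness through a first-return argument. For each $n$ one chooses a word $u \in \mathcal{L}_n(X)$ whose occurrences carry enough local information that each $\phi_i$ can be partially reconstructed from them; one then shows that the $N$ coset representatives contribute independently to the branching at $u$, giving $P_X(n+1) - P_X(n) \geq N - 1$ for all sufficiently large $n$ and hence the desired linear lower bound.

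The subtle point is that the branching inequality must hold along \emph{every} sufficiently large $n$, not merely along the $\limsup$ subsequence, in order to combine with the $\liminf$ hypothesis; this is where minimality -- rather than mere transitivity, as in Theorem \ref{thm:transitive} -- is essential for the sharp bound $|\Aut(X)/\langle\sigma\rangle| < k$. The uniform boundedness of return-time gaps of any word in a minimal shift, together with the orbit-distinctness established in the first paragraph, should be the key ingredients in promoting the branching inequality from a single length to all large lengths and thereby completing the proof.
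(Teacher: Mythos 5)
There are two genuine gaps. First, you invoke Theorem~\ref{thm:transitive} to conclude that $N:=|\Aut(X)/\langle\sigma\rangle|$ is finite, but that theorem requires $\limsup_{n\to\infty}P_X(n)/n<\infty$, whereas Theorem~\ref{theorem:minimal} only assumes a $\liminf$ condition; a minimal shift can satisfy $\liminf P_X(n)/n<k$ while $\limsup P_X(n)/n=\infty$, and the paper states the hypothesis with $\liminf$ precisely to cover this case. (This gap is repairable: you could work throughout with $k$ coset representatives chosen for contradiction, never assuming finiteness in advance.) Second, and more seriously, the heart of your argument --- that $N$ pairwise orbit-distinct images $\phi_1(x),\dots,\phi_N(x)$ force $P_X(n+1)-P_X(n)\geq N$ (you write $N-1$, which would only give $N\leq k$ rather than $N<k$) for \emph{all} sufficiently large $n$ --- is asserted via an unspecified Rauzy-graph/first-return construction and never proved. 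Orbit-distinctness is indeed a global condition, as you note, and nothing in the sketch converts it into a per-length branching count; this is exactly the step where the work lies, and it is missing.

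The paper's route is different and avoids both issues. It passes to the one-sided quotient $X_R$ and considers the set $\NLE_0$ of nonuniquely left extendable rays. An infinite version of Boshernitzan's lemma shows directly from $\liminf(P_X(n)-kn)=-\infty$ that $|\NLE_0|\leq k-1$: if there were $k$ branching words at every large length, $P_X(n)-kn$ would be eventually nondecreasing. (Note this uses the $\liminf$ hypothesis in the opposite direction from your plan: rather than proving a branching lower bound at every large $n$, it extracts infinitely many $n$ with few branching words, which suffices to bound the number of infinite branching rays.) Then, fixing $y\in\NLE_0$, each automorphism $\varphi$ sends $y$ into the forward $\sigma$-orbit of some $z_\varphi\in\NLE_0$, and two automorphisms with the same $z_\varphi$ agree up to a power of $\sigma$ because, by minimality, every word occurs syndetically in $y$ and so two sliding block codes agreeing on $y$ up to a shift define the same code. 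This injects $\Aut(X)/\langle\sigma\rangle$ into a set of size at most $k-1$. If you want to salvage your approach, the missing lemma you need is essentially this one: control the left-special words/rays and show automorphisms permute them modulo the shift.
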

For periodic minimal shifts, it is easy to see that $\Aut(X)\cong\Z/nZ$ where $n$ is the minimal period.  
Salo and T\"orm\"a~\cite{SaTo} asked if the automorphism group of any linearly recurrent shift is 
virtually $\mathbb{Z}$.   Linearly recurrent shifts are minimal and the factor complexity function grows at most linearly, 
and so Theorem~\ref{theorem:minimal} gives an affirmative answer to their question.

Roughly speaking, the proof of Theorem~\ref{thm:main} splits into two parts.  We start by studying shifts 
with a dense set of aperiodic points in Section~\ref{sec:aperiodic}, showing that the automorphism group is locally a group of polynomial growth, 
with the polynomial growth rate depending on the linear complexity assumption on the shift.  We sharpen this result to  understand transitive shifts of linear growth, leading to the proof of Theorem~\ref{thm:transitive} in Section~\ref{subsec:transitive}.
We then combine this with information on existence of aperiodic points, completing the proof of 
Theorem~\ref{thm:main} in Section~\ref{sec:general-linear}.
The proof of Theorem~\ref{theorem:minimal} in Section~\ref{sec:minimal} proceeds in a different manner, 
relying on a version of a lemma of Boshernitzan used to bound 
the number of ergodic probability measures on a shift with linear growth, which we use to bound 
the number of words in the language of the system that have multiple extensions.  

For some of these results, we are able to give examples showing that they are sharp.  These 
examples are included in Section~\ref{sec:examples}.

While writing up these results, we became aware of related work by Donoso, Durand, Maass, and Petit~\cite{DDMP}.  While some of the results obtained are the same, the methods are different and each method leads to new open directions.  

\section{Background and notation}\label{sec:notation}
\subsection{Shift systems}
We assume throughout that $\mathcal{A}$ is a  fixed finite set endowed with the discrete topology.  
If $x\in\mathcal{A}^{\mathbb{Z}}$, we denote the value of $x$ at $n\in\Z$ by $x(n)$.  
The metric $d(x,y):=2^{-\inf\{|n|\colon x(n)\neq y(n)\}}$ generates the product topology on $\mathcal{A}^{\mathbb{Z}}$ and endowed with this metric, $\A^\Z$ is a compact metric space; 
henceforth we assume this metric structure on $\A^{\Z}$.

The {\em left shift} $\sigma\colon\mathcal{A}^{\mathbb{Z}}\to\mathcal{A}^{\mathbb{Z}}$ is the map defined by $(\sigma x)(n):=x(n+1)$ and is a homeomorphism from $\mathcal{A}^{\mathbb{Z}}$ to itself.  If $X\subseteq\mathcal{A}^{\mathbb{Z}}$ is a closed, $\sigma$-invariant subset, then the pair $(X,\sigma)$ is called a {\em subshift of $\mathcal{A}^{\mathbb{Z}}$}, 
or just a {\em shift of $\mathcal{A}^{\mathbb{Z}}$}.  If the alphabet $\mathcal{A}$ is clear from the context, 
we refer to $(X,\sigma$) as just a {\em shift}. 

The set 
$$
\mathcal{O}(x):=\{\sigma^nx\colon n\in\N\} 
$$
is the {\em orbit of $x$} and we use $\overline{\mathcal{O}}(x)$ to denote its closure.
The shift $(X,\sigma)$ is {\em transitive} if there exists some $x\in X$ such that 
$\overline{\mathcal{O}}(x) = X$ and it is {\em minimal} if $\overline{\mathcal{O}}(x) = X$ 
for all $x\in X$.  
A point $x\in X$ is {\em periodic} if there exists some $n\in\N$ such that $\sigma^nx = x$ and otherwise it is said to be {\em aperiodic}.  

\subsection{Complexity of shifts}
For a shift $(X,\sigma)$ and $w=(a_{-m+1},\dots,a_{-1},a_0,a_1,$ $\dots,a_{m-1})\in\mathcal{A}^{2m+1}$, the {\em central cylinder set $[w]_0$ determined by $w$} is defined to be 
$$
[w]_0:=\left\{x\in X\colon x(n)=a_n\text{ for all }-m<n<m\right\}. 
$$
The collection of central cylinder sets forms a basis for the topology of $X$.  If $w=(a_0,\dots,a_{m-1})\in\mathcal{A}^m$, then the {\em one sided cylinder set $[w]_0^+$ determined by $w$} is given by 
$$
[w]_0^+:=\left\{x\in X\colon x(n)=a_n\text{ for all }0\leq n<m\right\}.
$$
For $m\in\N$,  define the set of {\em words $\mathcal{L}_m(X)$ of length $m$ in $X$} by  
$$
\mathcal{L}_m(X):=\left\{w\in\mathcal{A}^m\colon[w]_0^+\neq\emptyset\right\} 
$$
and define the {\em language $\mathcal{L}(X)$} of $X$ to be $\mathcal{L}(X):=\bigcup_{m=1}^{\infty}\mathcal{L}_m(X)$.  For $w\in\mathcal{L}(X)$, we denote the length of $w$ by $|w|$.  A word in $x\in X$ is also referred to as a {\em 
factor} of $x$.

A measure of the complexity of $X$ is the {\em (factor) complexity function} $P_X\colon X\to\N$, which counts the number of words of length $n$ in the language of $X$: 
$$
P_X(n):=|\mathcal{L}_n(X)|.  
$$
If $P_x(n)$ is the complexity function of a fixed $x\in X$, meaning 
it is the number of configurations in a block of size $n$ in $x$, then $P_X(n) \geq \sup_{x\in X}P_x(n)$, with equality holding when $X$ is a transitive shift.  

\subsection{The automorphism group of a shift}
Let $\Hom(X)$ denote the group of homeomorphisms from $X$ to itself.  If $h_1,\dots,h_n\in\Hom(X)$, then $\langle h_1,\dots,h_n\rangle$ denotes the subgroup of $\Hom(X)$ generated by $h_1,\dots,h_n$.  Thus the shift $\sigma\in\Hom(X)$ and its centralizer in $\Hom(X)$ is called the {\em automorphism group} of $(X,\sigma)$.  We denote the automorphism group of $(X,\sigma)$ by $\Aut(X)$ and endow it with the discrete topology.  

A map $\varphi\colon X\to X$ is a {\em sliding block code} if there exists $R\in\N$ such that for any $w\in\mathcal{L}_{2R+1}(X)$ and any $x,y\in[w]_0$, we have $(\varphi x)(0)=(\varphi y)(0)$.  Any number $R\in\N\cup\{0\}$ for which this property holds is called a {\em range} for $\varphi$.  The {\em minimal range} of $\varphi$ is its smallest range.

If $\varphi\colon X\to X$ is a sliding block code of range $R$, there is a natural map (which, by abuse of notation, we also denote by  $\varphi$) taking  
$\bigcup_{m=2R+1}^{\infty}\mathcal{L}_m(X)$ to $\mathcal{L}(X)$.  To define this  
extension of $\varphi$, let $m>2R$ and let $w=(a_0,\dots,a_{m-1})\in\mathcal{A}^m$.  For $0\leq i<m-2R$, choose $x_i\in[(a_i,\dots,a_{i+2R})]_0$ and define 
$$
\varphi(w):=\bigl((\varphi x_0)(0),(\varphi x_1)(0),\dots,(\varphi x_{m-2R-1})(0)\bigr).  
$$
Therefore if $w$ is a word of length at least $2R+1$, then $\varphi(w)$ is a word of length $|w|-2R$.

The elements of $\Aut(X)$ have a concrete characterization:
\begin{theorem}[Curtis-Hedlund-Lyndon Theorem~\cite{Hedlund2}]
\label{th:CHL}
If $(X,\sigma)$ is a shift, then any element of $\Aut(X)$ is a sliding block code.
\end{theorem}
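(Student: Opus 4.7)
The plan is to exploit continuity of $\varphi \in \Aut(X)$ together with compactness of $X$ and the fact that the alphabet $\A$ carries the discrete topology. The heart of the argument is to show that the map $x \mapsto (\varphi x)(0)$, valued in the discrete space $\A$, is determined by only finitely many coordinates of $x$, uniformly in $x$.

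First I would observe that, since $\varphi$ is a homeomorphism and the coordinate projection $\pi_0 \colon X \to \A$ given by $\pi_0(x) = x(0)$ is continuous, the composition $\pi_0 \circ \varphi \colon X \to \A$ is continuous. Because $\A$ is finite and discrete, each fiber $F_a := (\pi_0 \circ \varphi)^{-1}(a)$ for $a \in \A$ is clopen in $X$, and $X$ is partitioned into the finitely many clopen sets $\{F_a\}_{a \in \A}$. The next step is to translate clopenness into a block-code statement: the central cylinder sets $[w]_0$ with $w \in \mathcal{L}_{2m+1}(X)$ form a basis for the topology, so each $F_a$ is a union of such cylinders. By compactness of $X$ (which passes to each clopen $F_a$), each $F_a$ is a \emph{finite} union of central cylinders, and taking $R$ to be the maximum half-width appearing among all these cylinders over all $a \in \A$, we obtain a single $R \in \N$ such that whenever $x, y \in X$ agree on the coordinates $-R, -R+1, \ldots, R$, they lie in the same $F_a$, and hence $(\varphi x)(0) = (\varphi y)(0)$. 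This is exactly the defining condition for $\varphi$ to be a sliding block code with range $R$.

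Finally, I would record the consistency of this local rule across all integer positions, which is where the commutation $\varphi \circ \sigma = \sigma \circ \varphi$ enters. For any $n \in \Z$ and any $x \in X$, one has
\[
(\varphi x)(n) = (\sigma^n \varphi x)(0) = (\varphi \sigma^n x)(0),
\]
so the value $(\varphi x)(n)$ depends only on the coordinates of $\sigma^n x$ in the window $[-R, R]$, i.e., only on $x(n-R), x(n-R+1), \ldots, x(n+R)$. Thus the local rule determined by the range $R$ governs $\varphi$ at every coordinate, confirming that $\varphi$ is a sliding block code in the sense defined in the paper.

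The step I expect to be the main (and really the only) obstacle is the compactness-plus-discreteness extraction of a uniform radius $R$: one must be careful to note that clopenness of each $F_a$ alone is not quite enough — the key point is that the basis of central cylinders plus compactness lets one bound the radius needed uniformly across the finitely many fibers, rather than only pointwise. Everything else is bookkeeping with the shift-equivariance.
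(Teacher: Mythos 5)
Your proof is correct. The paper does not actually prove this statement --- it is quoted from Hedlund's paper~\cite{Hedlund2} as a known result --- so there is no internal argument to compare against; your argument is the standard one (continuity of $\pi_0\circ\varphi$ into a finite discrete alphabet, compactness to extract a uniform cylinder radius $R$, and shift-equivariance to propagate the local rule to every coordinate), and each step, including the refinement of the clopen fibers $F_a$ into finitely many central cylinders, is carried out correctly and matches the paper's definition of a sliding block code of range $R$.
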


For $R\in\N\cup\{0\}$,  we let $\Aut_R(X)\subseteq \Aut(X)$ denote the automorphisms of $(X,\sigma)$ for which $R$ is a (not necessarily minimal) range.  
Thus $\Aut(X)=\bigcup_{R=0}^{\infty}\Aut_R(X)$.  We observe that if $\varphi_1\in\Aut_{R_1}(X)$ and $\varphi_2\in\Aut_{R_2}(X)$, then $\varphi_1\circ\varphi_2\in\Aut_{R_1+R_2}(X)$.

In general, the automorphism group of a shift can be complicated, but 
Theorem~\ref{th:CHL} implies that $\Aut(X)$ is always countable.  

\subsection{Automorphisms and the full group}
\label{sec:full-group}

The {\em full group} $[\sigma]$ of a shift $(X,\sigma)$ is the subgroup of $\Hom(X)$ comprised of the orbit preserving homeomorphisms: 
$$
[\sigma]:=\left\{\psi\in\Hom(X):\psi(x)\in\mathcal{O}(x)\text{ for all }x\in X\right\}.
$$
Thus if $\psi\in[\sigma]$, then there is a function $k_{\psi}\colon X\to\Z$ such that $\psi(x)=\sigma^{k_{\psi}(x)}(x)$ for all $x\in X$.  

It follows from the definitions that the group $\Aut(X)\cap[\sigma]$ is the centralizer of $\sigma$ in $[\sigma]$.  
We note two basic facts  about $\Aut(X)\cap[\sigma]$ which we will need in order to study $\Aut(X)/\Aut(X)\cap[\sigma]$ in Section~\ref{lemma:aperiodic-finite}.  

\begin{lemma}
\label{lemma:normal}
If $(X,\sigma)$ is a shift, then 
$\Aut(X)\cap[\sigma]$ is normal in $\Aut(X)$.
\end{lemma}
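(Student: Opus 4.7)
The plan is to verify normality directly by a short conjugation computation, exploiting the fact that every element of $\Aut(X)$ commutes with $\sigma$ (by definition of the automorphism group as the centralizer of $\sigma$).

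First I would fix $\varphi \in \Aut(X) \cap [\sigma]$ and $\psi \in \Aut(X)$ and check that the conjugate $\psi \varphi \psi^{-1}$ lies in $\Aut(X) \cap [\sigma]$. Membership in $\Aut(X)$ is automatic since $\Aut(X)$ is a group, so the content is to verify that $\psi\varphi\psi^{-1}$ is still orbit preserving. Using the function $k_\varphi\colon X \to \Z$ associated to $\varphi \in [\sigma]$, so that $\varphi(y) = \sigma^{k_\varphi(y)}(y)$ for every $y \in X$, I would compute, for an arbitrary $x \in X$ and $y := \psi^{-1}(x)$,
$$
\psi\varphi\psi^{-1}(x) \;=\; \psi\bigl(\sigma^{k_\varphi(y)}(y)\bigr) \;=\; \sigma^{k_\varphi(y)}\bigl(\psi(y)\bigr) \;=\; \sigma^{k_\varphi(y)}(x),
$$
where the middle equality is the defining relation $\psi\sigma = \sigma\psi$ for elements of $\Aut(X)$. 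This displays $\psi\varphi\psi^{-1}(x)$ as an integer power of $\sigma$ applied to $x$, so $\psi\varphi\psi^{-1} \in [\sigma]$, as required. Setting $k_{\psi\varphi\psi^{-1}}(x) := k_\varphi(\psi^{-1}(x))$ even records the orbit-index function of the conjugate explicitly.

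There is no real obstacle here: the statement reduces to the observation that conjugation by any element commuting with $\sigma$ carries $\sigma$-orbits to $\sigma$-orbits, and the only bookkeeping is tracking how $k_\varphi$ transforms under the change of base point $x \mapsto \psi^{-1}(x)$. This same identity will be useful later for understanding $\Aut(X)/(\Aut(X)\cap[\sigma])$ in Section~\ref{lemma:aperiodic-finite}.
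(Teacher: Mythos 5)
Your proof is correct and is essentially identical to the paper's: both conjugate an orbit-preserving automorphism by an arbitrary automorphism, write the former as $\sigma^{k(\cdot)}$ pointwise, and slide the power of $\sigma$ past the conjugating map using the defining commutation with $\sigma$. The only difference is cosmetic (your roles of $\varphi$ and $\psi$ are swapped relative to the paper, and your explicit formula $k_{\psi\varphi\psi^{-1}}(x)=k_\varphi(\psi^{-1}(x))$ is a slightly tidier bookkeeping of the same identity).
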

\begin{proof}
Let $\varphi\in\Aut(X)$ and suppose $\psi\in\Aut(X)\cap[\sigma]$.  Let $k_{\varphi}\colon X\to\Z$ be a function such that $\varphi(x)=\sigma^{k_{\varphi}(x)}(x)$ for all $x\in X$.  Fix $x\in X$ and observe that since $\varphi$ and $\sigma$ commute,
$$
\varphi\circ\psi\circ\varphi^{-1}(x)=\varphi\circ\sigma^{k_{\varphi}(\varphi^{-1}(x))}\circ\varphi^{-1}(x)=\sigma^{k_{\varphi}(\varphi^{-1}(x))}(x).  
$$
As this holds for any $x\in X$, it follows that $\varphi\circ\psi\circ\varphi^{-1}\in\Aut(X)\cap[\sigma]$.  Since $\phi\in\Aut(X)$ and $\psi\in\Aut(X)\cap[\sigma]$ are arbitrary, we have 
$$\Aut(X)\cap[\sigma]=\varphi\cdot\left(\Aut(X)\cap[\sigma]\right)\cdot\varphi^{-1}$$
for all $\varphi\in\Aut(X)$.  So $\Aut(X)\cap[\sigma]$ is normal in $\Aut(X)$.
\end{proof}

\begin{lemma}\label{lemma:abelian}
If $(X,\sigma)$ is a shift, then 
$\Aut(X)\cap[\sigma]$ is abelian.
\end{lemma}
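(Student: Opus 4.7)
The plan is to mimic the structure of the proof of Lemma~\ref{lemma:normal}, using crucially that every element of $\Aut(X)\cap[\sigma]$ not only preserves $\sigma$-orbits but also commutes with $\sigma$ itself (since it lies in the automorphism group). Given $\psi_1,\psi_2\in\Aut(X)\cap[\sigma]$, I would first fix, for each $i\in\{1,2\}$, a function $k_i\colon X\to\Z$ such that $\psi_i(x)=\sigma^{k_i(x)}(x)$ for every $x\in X$; these exist by definition of $[\sigma]$. The goal is then to verify $\psi_1\circ\psi_2(x)=\psi_2\circ\psi_1(x)$ pointwise.

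The computation is essentially one line. Using that $\psi_1$ commutes with every power of $\sigma$,
$$
\psi_1\circ\psi_2(x)=\psi_1\bigl(\sigma^{k_2(x)}(x)\bigr)=\sigma^{k_2(x)}\bigl(\psi_1(x)\bigr)=\sigma^{k_2(x)+k_1(x)}(x),
$$
and symmetrically, using that $\psi_2$ commutes with $\sigma$,
$$
\psi_2\circ\psi_1(x)=\psi_2\bigl(\sigma^{k_1(x)}(x)\bigr)=\sigma^{k_1(x)}\bigl(\psi_2(x)\bigr)=\sigma^{k_1(x)+k_2(x)}(x).
$$
Since these values agree for every $x\in X$, one concludes $\psi_1\circ\psi_2=\psi_2\circ\psi_1$.

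I do not expect any real obstacle here. The only conceptual subtlety worth flagging is that the integer $k_i(x)$ need not be uniquely determined when $x$ is a periodic point of $\sigma$, but this causes no trouble: having fixed any admissible choice of $k_i$ for each $\psi_i$, the displayed calculation shows the two composites agree as maps $X\to X$, since the exponents of $\sigma$ applied to $x$ come out to the same integer $k_1(x)+k_2(x)$ on both sides.
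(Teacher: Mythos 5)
Your proof is correct and follows essentially the same argument as the paper's: fix the orbit cocycles $k_i$, use that each $\psi_i$ commutes with powers of $\sigma$ to slide it past $\sigma^{k_j(x)}$, and observe that both composites equal $\sigma^{k_1(x)+k_2(x)}(x)$. The remark about non-uniqueness of $k_i(x)$ at periodic points is a reasonable clarification but, as you note, immaterial once a choice is fixed.
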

\begin{proof}
Suppose $\varphi_1, \varphi_2\in\Aut(X)\cap[\sigma]$.  For $i=1,2$, let $k_{\varphi_i}\colon X\to\Z$ be functions such that $\varphi_i(x)=\sigma^{k_{\varphi_i(x)}}(x)$ for all $x\in X$.  For any $x\in X$,  
\begin{eqnarray*}
\varphi_1\circ\varphi_2(x)&=&\varphi_1\circ\sigma^{k_{\varphi_2}(x)}(x)=\sigma^{k_{\varphi_2}(x)}\circ\varphi_1(x) \\
&=&\sigma^{k_{\varphi_2}(x)}\circ\sigma^{k_{\varphi_1}(x)}(x)=\sigma^{k_{\varphi_1}(x)}\circ\sigma^{k_{\varphi_2}(x)}(x) \\
&=&\sigma^{k_{\varphi_1}(x)}\circ\varphi_2(x)=\varphi_2\circ\sigma^{k_{\varphi_1}(x)}(x) \\
&=&\varphi_2\circ\varphi_1(x).  
\end{eqnarray*}
Therefore $\varphi_1\circ\varphi_2=\varphi_2\circ\varphi_1$.
\end{proof}

\subsection{Summary of group theoretic terminology} 
For convenience, we summarize the algebraic properties that we prove $\Aut(X)$ may have.  
We say that a group $G$ is {\em locally $P$} if every finitely generated subgroup of $G$ has property $P$.  
The group $G$ is {\em virtually $H$} if $G$ contains $H$ as a subgroup of finite index.  
The group $G$ is {\em $K$-by-$L$} if there exists a normal subgroup $H$ of $G$ which is $K$ and such that the quotient 
$G/H$ is $L$.  

\section{Shifts of linear growth with a dense set of aperiodic points}
\label{sec:transitive}

\subsection{Cassaigne's characterization of linear growth}

Linear growth can be characterized in terms of the (first) difference of the complexity function: 
\begin{theorem}[Cassaigne~\cite{C}]\label{theorem:cassaigne}
A shift $(X,\sigma)$ satisfies $p_X(n)=O(n)$ if and only if the difference function $p_X(n+1)-p_X(n)$ is bounded.
\end{theorem}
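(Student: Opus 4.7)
The ``only if'' direction is immediate: if $p_X(n+1) - p_X(n) \leq C$ for every $n$, telescoping yields $p_X(n) \leq p_X(1) + C(n-1) = O(n)$. The content is the converse, which I would prove by contrapositive.

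\emph{Setup.} For $w \in \mathcal{L}_n(X)$, let $r(w) := |\{a \in \A : wa \in \mathcal{L}_{n+1}(X)\}|$ and $l(w) := |\{a \in \A : aw \in \mathcal{L}_{n+1}(X)\}|$. Since every word of length $n+1$ is uniquely a one-letter right- (respectively left-) extension of its length-$n$ prefix (suffix),
\begin{equation*}
p_X(n+1) = \sum_{w \in \mathcal{L}_n(X)} r(w) = \sum_{w \in \mathcal{L}_n(X)} l(w),
\end{equation*}
and hence, writing $s(n) := p_X(n+1) - p_X(n)$,
\begin{equation*}
s(n) = \sum_{w \in \mathcal{L}_n(X)}(r(w) - 1) = \sum_{w \in \mathcal{L}_n(X)}(l(w) - 1).
\end{equation*}
Only right-special (respectively left-special) words, namely those with $r(w) \geq 2$ (resp.\ $l(w) \geq 2$), contribute to these sums.

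\emph{Second-difference formula.} Next I would group length-$(n+1)$ words by their length-$n$ suffix and use the trivial fact $r(cu) \leq r(u)$ (any right-extension of $cu$ restricts to one of $u$). Expanding $s(n+1)$ in this fashion and subtracting $s(n)$ leads to Cassaigne's identity
\begin{equation*}
s(n+1) - s(n) = \sum_{u \in \mathrm{BS}_n(X)} \bigl(e(u) - r(u) - l(u) + 1\bigr),
\end{equation*}
where $\mathrm{BS}_n(X)$ denotes the bispecial factors of length $n$ (those with both $r(u) \geq 2$ and $l(u) \geq 2$) and $e(u) := |\{(a,b) \in \A^2 : aub \in \mathcal{L}_{n+2}(X)\}|$. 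Elementary bounds give $1 - |\A| \leq e(u) - r(u) - l(u) + 1 \leq (|\A|-1)^2$, so the per-step change in $s$ is controlled by $|\mathrm{BS}_n(X)|$.

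\emph{Conclusion.} Assume for contradiction that $s(n)$ is unbounded. Two further observations drive the contradiction: every bispecial factor is in particular right-special, so $|\mathrm{BS}_n(X)| \leq s(n)$; and consequently $\sum_{k < n} |\mathrm{BS}_k(X)| \leq p_X(n) - p_X(1)$, which under the hypothesis $p_X(n) = O(n)$ is itself $O(n)$. Pick $N$ with $s(N)$ far exceeding the linear growth constant. The decrement bound
\begin{equation*}
s(n) \geq s(N) - (|\A|-1)\sum_{k=N}^{n-1}|\mathrm{BS}_k(X)|
\end{equation*}
then forces $s$ to remain comparable to $s(N)$ on an interval past $N$ whose length is proportional to $N$, whence $p_X(n) = p_X(1) + \sum_{k<n} s(k)$ exceeds any prescribed linear ceiling, contradicting $p_X(n) = O(n)$.

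The main obstacle is precisely this last bootstrap: converting \emph{pointwise} unboundedness of $s$ into a \emph{persistent} lower bound over an interval of length comparable to $N$, while simultaneously controlling $\sum_k |\mathrm{BS}_k(X)|$ globally through the linear complexity hypothesis. Calibrating these two estimates against each other is the heart of Cassaigne's argument, and the place where the combinatorics of bispecial factors is genuinely used.
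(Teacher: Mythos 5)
The paper does not actually prove this statement: it is imported verbatim from Cassaigne's paper \cite{C}, and only its easy consequence (Corollary~3.2) is used later, so your attempt has to be measured against Cassaigne's own argument rather than anything in the text. Your preliminaries are correct and are indeed the standard machinery: the identities $p_X(n+1)=\sum_w r(w)=\sum_w l(w)$, the bilateral-extension formula $s(n+1)-s(n)=\sum_{u\in\mathrm{BS}_n}\bigl(e(u)-r(u)-l(u)+1\bigr)$ (non-bispecial $u$ contribute $0$ because, e.g., $l(u)=1$ forces $l(ub)=1$ for every $b$ and hence $e(u)=r(u)$), the bounds $1-|\A|\le e(u)-r(u)-l(u)+1\le(|\A|-1)^2$, and the estimates $|\mathrm{BS}_n|\le s(n)$ and $\sum_{k<n}|\mathrm{BS}_k|\le p_X(n)-p_X(1)$.

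The gap is the final bootstrap, and it is not merely a calibration left to the reader: with only the estimates you state, no choice of $N$ yields a contradiction. Write $A=|\A|-1$ and suppose $p_X(n)\le Cn$. Unboundedness of $s$ gives $N$ with $s(N)=S$ arbitrarily large in absolute terms, but automatically $S=p_X(N+1)-p_X(N)\le C(N+1)$, so $S=O(N)$ and ``far exceeding the linear growth constant'' cannot mean $S\gg CN$. For $s$ to persist above $S/2$ on $[N,N+L]$ you need $A\sum_{k=N}^{N+L-1}|\mathrm{BS}_k|\le S/2$, and the only available bound on that sum is $\sum_{k=N}^{N+L-1}s(k)=p_X(N+L)-p_X(N)\le C(N+L)-p_X(N)$, which can already be of order $(C-1)N$ for $L=1$ --- the same order as the largest possible $S$. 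Running your scheme: if $L$ is maximal with $\sum_{k=N}^{N+L-1}|\mathrm{BS}_k|\le S/(2A)$, you get $(L+1)S/2\le p_X(N+L+1)\le C(N+L+1)$, which is perfectly consistent with $L\approx 4CN/S$ and never violates linearity. The underlying reason is that the bound $|\mathrm{BS}_n|\le s(n)$ is too lossy: it charges every bispecial factor the same amount regardless of whether its multiplicity $m(u)=e(u)-r(u)-l(u)+1$ is positive, zero, or negative, and in linear-complexity examples (already Sturmian shifts) there are infinitely many bispecial factors, almost all of them ordinary ($m(u)=0$), so $\sum_k|\mathrm{BS}_k|$ genuinely grows linearly and cannot serve as a finite budget. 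Cassaigne's proof requires a finer idea: exploit that every right-special word of length $m$ has a right-special suffix of each smaller length, so the right-special factors organize into finitely many suffix trees branching only at bispecial words, and play this structure against the fact that $\frac1n\sum_{k=n}^{2n-1}s(k)\le 2C$, which supplies scales where there are few special factors. That ingredient --- exactly the one you label ``the main obstacle'' --- is the theorem, and the proposal does not supply it.
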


\begin{definition}
Let $w=(a_0,\dots,a_{|w|-1})\in\mathcal{L}_{|w|}(X)$.  For fixed $m\in\N$, we say that $w$ {\em extends uniquely $m$ times to the right} if there is exactly one word $\widetilde{w}=(b_0,\dots,b_{|w|+m-1})\in\mathcal{L}_{|w|+m}(X)$ such that $a_i=b_i$ for all $0\leq i<|w|$.
\end{definition}

\begin{corollary}\label{corollary:extend}
Assume $(X,\sigma)$ satisfies $p_X(n)=O(n)$. 
Then for any $m,n\in\N$, the number of words of length $n$ that do not extend uniquely $m$ times to the right is at most $B m$, where 
$B=\max_{n\in\N}\bigl(p_X(n+1)-p_X(n)\bigr)$.
\end{corollary}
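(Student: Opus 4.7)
The plan is to carry out a short double-counting argument, combining Cassaigne's characterization of linear growth with a partition of $\mathcal{L}_{n+m}(X)$ according to length-$n$ prefixes. First, I would invoke the definition of $B$ directly via Theorem~\ref{theorem:cassaigne}: since $p_X(k+1)-p_X(k)\leq B$ for every $k$, telescoping gives
$$
p_X(n+m)-p_X(n)=\sum_{k=0}^{m-1}\bigl(p_X(n+k+1)-p_X(n+k)\bigr)\leq Bm.
$$

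Next, for each $w\in\mathcal{L}_n(X)$, let $e(w)$ denote the number of words in $\mathcal{L}_{n+m}(X)$ having $w$ as a length-$n$ prefix. Every word of length $n+m$ in $\mathcal{L}(X)$ has a unique prefix of length $n$, and this prefix automatically lies in $\mathcal{L}_n(X)$, so partitioning $\mathcal{L}_{n+m}(X)$ by prefix yields
$$
p_X(n+m)=\sum_{w\in\mathcal{L}_n(X)}e(w),
$$
and therefore
$$
p_X(n+m)-p_X(n)=\sum_{w\in\mathcal{L}_n(X)}\bigl(e(w)-1\bigr).
$$
A small but necessary observation is that $e(w)\geq 1$ for every $w\in\mathcal{L}_n(X)$: if $x\in X$ satisfies $x(0)\cdots x(n-1)=w$, then $x(0)\cdots x(n+m-1)$ is a length-$(n+m)$ extension of $w$ in $\mathcal{L}(X)$. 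Consequently every term $e(w)-1$ is nonnegative.

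Now, by the definition of unique extension, $w$ extends uniquely $m$ times to the right precisely when $e(w)=1$, so a word $w$ that fails to extend uniquely has $e(w)\geq 2$, i.e.\ contributes at least $1$ to the sum above. Hence the number of such words is bounded by $\sum_{w\in\mathcal{L}_n(X)}(e(w)-1)=p_X(n+m)-p_X(n)\leq Bm$, which is the desired inequality.

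There is essentially no hard step here: the only thing one must be careful about is ensuring that $e(w)\geq 1$ (so that words with no extension don't secretly contribute to the count in the wrong direction), and this is immediate from the definition of $\mathcal{L}_n(X)$. The content of the corollary is really just that linearly-bounded complexity forces most words to branch in essentially one way, and the telescoping bound above captures this exactly.
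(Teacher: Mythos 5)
Your proof is correct. The paper reaches the same bound by a slightly different decomposition: it first observes that $p_X(N+1)-p_X(N)$ bounds the number of words of length $N$ that fail to extend uniquely \emph{once}, then stratifies the words of length $n$ that fail to extend uniquely $m$ times according to the least $k$ for which they extend uniquely $k$ but not $k+1$ times, injecting each stratum into the set of one-step-branching words of length $n+k$; summing and telescoping gives the same quantity $p_X(n+m)-p_X(n)\leq Bm$. You instead bound the number of failing words in one shot, by partitioning $\mathcal{L}_{n+m}(X)$ according to length-$n$ prefixes and noting that $\sum_{w}(e(w)-1)=p_X(n+m)-p_X(n)$ with every summand nonnegative and every failing $w$ contributing at least $1$. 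Your route is arguably cleaner: it avoids the intermediate classification by first failure time and the implicit injectivity claim there, and you are right to flag that $e(w)\geq 1$ is the one point needing care (it rules out the degenerate case $e(w)=0$, which would otherwise make ``fails to extend uniquely'' compatible with contributing $-1$ to the sum). Both arguments rest on the same underlying fact --- that the complexity increments control branching --- so the difference is organizational rather than conceptual.
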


Note that it follows from Cassaigne's Theorem that $B$ is finite.

\begin{proof}
For any $N\in\N$, the quantity $p_X(N+1)-p_X(N)$ is an upper bound on the number of words of length $N$ that do not extend uniquely to the right.  For any word $w$ of length $n$ which does not extend uniquely $m$ times to the right, there exists $0\leq k<m$ such that $w$ extends uniquely $k$ times to the right, but not $k+1$ times.  For fixed $k$, the number of words for which this is the case is at most the number of words of length $n+k$ that do not extend uniquely to the right.   So the number of words of length $n$ that fail to extend uniquely $m$ times to the right is at most 
\begin{equation*}
\sum_{k=1}^m\bigl(p_X(n+k)-p_X(n+k-1)\bigr)\leq Bm.\hfill\qedhere
\end{equation*}
\end{proof}

\subsection{Assuming a dense set of aperiodic points}
\label{sec:aperiodic}

We start by considering shifts with a dense set of aperiodic points.  This 
assumption holds in particular when the shift has no isolated points:  if $X$ has no 
isolated points then, for any fixed period, the set of periodic points with that period has 
empty interior.  Then the Baire Category Theorem implies that the set of all 
periodic points has empty interior.  In particular, the set of aperiodic points is dense. 
The two assumptions are equivalent if the set of aperiodic points is nonempty.  

\begin{lemma}\label{lemma:k-transitive}
Suppose $(X,\sigma)$ is a shift with a dense set of aperiodic points and there exists $k\in\N$ such that 
$$
\limsup_{n\to\infty}\frac{P_X(n)}{n}<k.  
$$
Then there exist $x_1,\dots,x_{k-1}\in X$ such that 
$$
X=\overline{\mathcal{O}}(x_1)\cup\overline{\mathcal{O}}(x_2)\cup\dots\cup\overline{\mathcal{O}}(x_{k-1}).
$$
\end{lemma}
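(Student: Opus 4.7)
The plan is to argue by contradiction. Suppose $X$ cannot be covered by the orbit closures of any $k-1$ points, aiming for a contradiction with $\limsup_n P_X(n)/n < k$. First I would iteratively pick aperiodic points $x_1,\ldots,x_k\in X$ satisfying $x_i\notin\overline{\mathcal{O}}(x_1)\cup\cdots\cup\overline{\mathcal{O}}(x_{i-1})$ for each $2\le i\le k$: at each stage the union on the right is a proper closed subset of $X$ by the contradiction hypothesis, and its open complement contains an aperiodic point by the density hypothesis. For each such $i\geq 2$, since $x_i$ is separated from $\bigcup_{j<i}\overline{\mathcal{O}}(x_j)$ by some basic clopen neighborhood, a sufficiently long central window $u_i:=x_i(-R_i)\cdots x_i(R_i)$ of $x_i$ does not appear as a factor of any $x_j$ with $j<i$; equivalently, $u_i\in\mathcal{L}(x_i)\setminus\bigcup_{j<i}\mathcal{L}(x_j)$.

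Next I would set $A_1(n):=\mathcal{L}_n(x_1)$ and, for $i\geq 2$, $A_i(n):=\{w\in\mathcal{L}_n(x_i):u_i\text{ occurs as a sub-factor of }w\}$. These sets are pairwise disjoint subsets of $\mathcal{L}_n(X)$: any element of $A_i(n)$ with $i\geq 2$ contains $u_i$ and is therefore not a factor of any $x_j$ with $j<i$. The Morse--Hedlund theorem applied to $x_1$ gives $|A_1(n)|\geq n+1$, and if one can further show $|A_i(n)|\geq n-O(1)$ for each $i\geq 2$, summing gives $P_X(n)\geq kn-O(1)$ for large $n$, the desired contradiction.

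The hard part will be the lower bound $|A_i(n)|\geq n-O(1)$ for $i\geq 2$: a general aperiodic point may admit a prescribed factor that appears only finitely often, in which case only $O(1)$ of the length-$n$ windows of $x_i$ can be arranged to contain a given occurrence of $u_i$, with no guarantee that those windows yield distinct words. I expect to address this by refining the choice of $x_i$---for example, by replacing it with an aperiodic recurrent point (when such a point exists in $\overline{\mathcal{O}}(x_i)\setminus\bigcup_{j<i}\overline{\mathcal{O}}(x_j)$), so that $u_i$ recurs with bounded gaps and every sufficiently long factor of $x_i$ contains $u_i$---or, alternatively, by bounding the differences $P_X(n+1)-P_X(n)$ from below directly via Cassaigne's theorem (Theorem~\ref{theorem:cassaigne}), exhibiting $k$ distinct right-special factors of length $n$ in $\mathcal{L}(X)$, each forced to contain a different $u_i$.
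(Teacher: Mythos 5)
Your skeleton is the same as the paper's: assume no $k-1$ orbit closures suffice, extract aperiodic points $x_1,\dots,x_k$ together with words $u_i$ occurring in $x_i$ but in no $x_j$ with $j<i$, and count the resulting pairwise disjoint families of length-$n$ factors, using Morse--Hedlund for the first family. But the step you flag as ``the hard part'' --- the bound $|A_i(n)|\geq n-O(1)$ for $i\geq 2$ --- is the entire content of the lemma beyond bookkeeping, and neither of your proposed repairs closes it. Passing to an aperiodic \emph{recurrent} point of $\overline{\mathcal{O}}(x_i)\setminus\bigcup_{j<i}\overline{\mathcal{O}}(x_j)$ fails twice over: such a point need not exist (you hedge on this yourself), and recurrence alone does not make $u_i$ occur with bounded gaps --- that would require uniform recurrence, which you cannot arrange while keeping $u_i$ in the language and staying outside the earlier orbit closures. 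The right-special-factor alternative is also unsubstantiated: there is no a priori reason each disjoint family must contain a right-special word of every large length (when $u_i$ occurs only finitely often in $x_i$, the right-special factors of $\overline{\mathcal{O}}(x_i)$ of a given length may all avoid $u_i$). As written, the proof does not go through.

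The paper closes the gap with a dichotomy in $i$. If $u_i$ occurs syndetically in $x_i$, then every sufficiently long factor of $x_i$ contains $u_i$, so $A_i(N)=\mathcal{L}_N(x_i)$ for large $N$ and aperiodicity of $x_i$ gives $|A_i(N)|\geq N+1$ by Morse--Hedlund. If not, then $x_i$ contains arbitrarily long blocks free of $u_i$ adjacent to an occurrence of $u_i$, hence for every large $N$ there is a factor $w$ of $x_i$ of length $N$ containing \emph{exactly one} occurrence of $u_i$, sitting at (say) its right end. Sliding the length-$N$ window rightward from this position through $N-|u_i|$ steps produces factors of $x_i$ that all contain $u_i$ and are pairwise distinct, because in each the \emph{leftmost} occurrence of $u_i$ sits at a different offset (an earlier occurrence would lie entirely inside $w$, contradicting uniqueness). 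This yields $|A_i(N)|\geq N-|u_i|$, and your counting then finishes the argument exactly as you describe. That ``exactly one occurrence, pinned at the boundary'' device is the single missing idea; I'd also note in passing that a given occurrence of $u_i$ lies in $n-|u_i|+1$ windows of length $n$, not $O(1)$ --- the genuine obstruction is only the distinctness of those windows, which the device above is designed to force.
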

\begin{proof}
Suppose not and let $x_1\in X$.  Since $\overline{\mathcal{O}}(x_1)\neq X$, there is a word $w_1\in\mathcal{L}(X)$ such that $[w_1]_0^+\cap\overline{\mathcal{O}}(x_1)=\emptyset$.  Choose $x_2\in X$ with $x_2\in[w_1]_0^+$.  
Let $i<k$ and suppose that we have constructed $x_1,\dots,x_i\in X$ and $w_1,\dots,w_{i-1}\in\mathcal{L}(X)$ such that $[w_{j_1}]_0\cap\overline{\mathcal{O}}(x_{j_2})=\emptyset$ whenever $j_2\leq j_1$.  Since $\overline{\mathcal{O}}(x_1)\cup\dots\cup\overline{\mathcal{O}}(x_i)\neq X$, there is a word $w_i\in\mathcal{L}(X)$ such that $[w_i]_0^+\cap\overline{\mathcal{O}}(x_1)\cup\dots\cup\overline{\mathcal{O}}(x_i)=\emptyset$.  Let $x_{i+1}\in[w_i]_0^+$ and 
we continue this construction until $i=k$.

Let $N>\max_{1\leq i<k}|w_i|$ be a fixed large integer (to be specified later).  Since $x_1$ is aperiodic, there are at least $N+1$ distinct factors of length $N$ in $\overline{\mathcal{O}}(x_1)$.  Therefore there are at least $N+1$ distinct factors of length $N$ in $X$ which do not contain the words $w_1,\dots,w_{k-1}$.  We claim that for $1\leq i<k$, 
there are at least $N-|w_i|$ distinct factors in $\overline{\mathcal{O}}(x_{i+1})$ which contain the word $w_i$ but do not contain any of the words $w_{i+1},w_{i+2},\dots,w_{k-1}$.  Assuming this claim,  then for any sufficiently large $N$ we have $p_X(N)\geq kN-\sum_{i=1}^k|w_i|$, a contradiction of the complexity assumption.

We are left with proving the claim. 
Let $1\leq i<k$ be fixed.  By construction, the word $w_i$ appears in $\overline{\mathcal{O}}(x_{i+1})$ but $[w_j]_0^+\cap\overline{\mathcal{O}}(x_{i+1})=\emptyset$ for any $j>i$.  If $w_i$ appears syndetically in $x_{i+1}$ then so long as $N$ is sufficiently large, every factor of $x_{i+1}$ of length $N$ contains the word $w_i$.  In this case, since $x_{i+1}$ is aperiodic, there are at least $N+1$ distinct factors in $\overline{\mathcal{O}}(x_{i+1})$ which contain $w_i$ but not $w_j$ for any $j>i$.  Otherwise $w_i$ does not appear syndetically in $x_{i+1}$ and so there are arbitrarily long factors in $x_{i+1}$ which do not contain $w_i$.  Since $w_i$ appears at least once in $x_{i+1}$, it follows that there are arbitrarily long words which appear in $x_{i+1}$ which contain exactly one occurrence of $w_i$ and we can assume that $w_i$ occurs as either the rightmost or leftmost subword.  Without loss, we assume that there exists a word $w$ of length $N$ which contains $w_i$ as its rightmost subword and has no other occurrences of $w_i$.  Choose $j\in\Z$ such that 
$$
w=\bigl(x_{i+1}(j),x_{i+1}(j+1),\dots,x_{i+1}(j+|w|-1)\bigr).  
$$
By construction, if $0\leq s<|w|-|w_i|$ then the word 
$$
w^{(s)}:=\bigl(x_{i+1}(j+s),x_{i+1}(j+s+1),\dots,x_{i+1}(j+s+|w|-1)\bigr) 
$$
is a word of length $N$ for which the smallest $t\in\{0,\dots,|w|-|w_i|\}$ such that 
$$
w_i=\bigl(x_{i+1}(j+t),x_{i+1}(j+t+1),\dots,x_{i+1}(j+t+|w_i|-1\bigr) 
$$
is $t=|w|-|w_i|-s$.  Therefore, the words $w^{(s)}$ are pairwise distinct and each contains $w_i$ as a subword.  By construction, they do not contain $w_j$ for any $j>i$, thus establishing the claim.
\end{proof}

\begin{proposition}\label{prop:polynomial}
Suppose that $(X,\sigma)$ is a shift with a dense set of aperiodic points and there exists $k\in\N$ such that 
$$
\limsup_{n\to\infty}\frac{P_X(n)}{n}<k.  
$$
Then $\Aut(X)$ is locally a group of polynomial growth with  polynomial growth rate is at most $k-1$.  
Moreover, if $q\in\N$ is the smallest cardinality of a set $x_1,\dots,x_q\in X$ such that $\mathcal{O}(x_1)\cup\mathcal{O}(x_2)\cup\cdots\cup\mathcal{O}(x_q)$ is dense in $X$, then the polynomial growth rate of any finitely generated subgroup of $\Aut(X)$ is at most $q$.
\end{proposition}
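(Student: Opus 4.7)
My plan is to reduce the polynomial-growth claim to a linear bound on the size of evaluation images, using that automorphisms of $X$ are determined by their action on a dense union of orbits.

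Fix a finitely generated subgroup $\Gamma = \langle \varphi_1, \dots, \varphi_m \rangle \leq \Aut(X)$, and let $R$ be a common range for $\varphi_1^{\pm 1}, \dots, \varphi_m^{\pm 1}$. Every element in the word-metric ball $B_\Gamma(n)$ of radius $n$ is then contained in $\Aut_{nR}(X)$. Choose $x_1, \dots, x_q \in X$ so that $\mathcal{O}(x_1) \cup \dots \cup \mathcal{O}(x_q)$ is dense in $X$; such a set exists with $q \leq k-1$ by Lemma~\ref{lemma:k-transitive}, and taking $q$ minimal gives the sharper bound in the statement. The evaluation map
$$
\Phi\colon \Aut(X) \to X^q, \qquad \Phi(\alpha) = \bigl(\alpha(x_1), \dots, \alpha(x_q)\bigr),
$$
is injective: any $\alpha \in \Aut(X)$ is continuous and commutes with $\sigma$, so its values on $x_1,\dots,x_q$ determine it on each orbit $\mathcal{O}(x_i)$, and by density and continuity on all of $X$.

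It therefore suffices to show that for each $i$ the set $E_i(n) := \{\alpha(x_i) : \alpha \in B_\Gamma(n)\}$ has cardinality $O(n)$, since this yields $|B_\Gamma(n)| \leq \prod_{i=1}^{q} |E_i(n)| = O(n^q)$, which is exactly polynomial growth of degree at most $q \leq k-1$.

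For the linear bound on $|E_i(n)|$, I would exploit the extension-uniqueness input of Corollary~\ref{corollary:extend}: in the language of $X$ only $O(nR)$ factors of any given length fail to extend uniquely $nR$ further steps. Combined with the Curtis-Hedlund-Lyndon sliding-block-code description, this lets me argue that, up to a bounded amount of boundary data, the image $\alpha(x_i)$ is determined by the local rule on a canonically located finite piece of $x_i$; the shift-equivariance of $\alpha$ then constrains the remaining freedom to an integer shift of magnitude at most $nR$, producing the claimed linear bound. The main obstacle is making this bound precise, since $\alpha(x_i)$ need not lie on any single $\mathcal{O}(x_j)$ but only in $\bigcup_j \overline{\mathcal{O}}(x_j)$; one must convert proximity of $\alpha(x_i)$ to some translate $\sigma^k x_j$ into an honest combinatorial count, which is where the low-complexity hypothesis is essential.
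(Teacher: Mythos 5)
Your overall architecture matches the paper's: an injective evaluation map on a minimal dense union of $q$ orbits, a product bound $|B_\Gamma(n)|\leq\prod_i|E_i(n)|$, and a hoped-for linear bound on each factor. The injectivity argument and the reduction are fine. But the heart of the proposition is precisely the linear bound on $|E_i(n)|$, and that is the step you leave as a sketch and yourself flag as the ``main obstacle.'' As written, the mechanism you propose for it does not work, so there is a genuine gap.

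Specifically, you suggest that ``shift-equivariance constrains the remaining freedom to an integer shift of magnitude at most $nR$,'' i.e.\ that the various images $\alpha(x_i)$ for $\alpha\in B_\Gamma(n)$ differ from one another essentially by bounded powers of $\sigma$. There is no reason for this: distinct automorphisms can send $x_i$ to points that are not shift-translates of each other (indeed $\alpha(x_i)$ need not even lie on $\bigcup_j\mathcal{O}(x_j)$, only in its closure, as you note), and a priori the number of range-$nR$ block codes is doubly exponential in $nR$, so nothing forces a linear count without a new idea. The paper's new idea is to abandon the points $x_i$ entirely and work with finite words: for each $i$ one chooses a factor $\tilde{w}_i$ of $x_i$ of length at least $3R+1$ so that every word of $\mathcal{L}_{2R+1}(X)$ occurs in some $\tilde{w}_i$ (this makes the word-level map $\varphi\mapsto(\varphi(w_1),\dots,\varphi(w_q))$ injective on $\Aut_R(X)$, since it determines the whole block code), and then extends $\tilde{w}_i$ to a word $w_i$ that either pins down a unique point of $X$ or else \emph{fails to extend uniquely} on one side. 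The linear bound then comes from the two-sided range hypothesis: if $\varphi$ \emph{and} $\varphi^{-1}$ both have range $R$ and $w_i$ does not extend uniquely to the right, then $\varphi(w_i)$ cannot extend uniquely $R+1$ times to the right (otherwise applying $\varphi^{-1}$ would force $w_i$ to extend uniquely), and Corollary~\ref{corollary:extend} says there are at most $B(R+1)$ such words. Your sketch never uses the range bound on $\alpha^{-1}$, and without it no such count is available. So while your frame is the right one, the decisive combinatorial device --- the choice of the words $w_i$ at the boundary of unique extendability, plus the inverse-range argument feeding into Corollary~\ref{corollary:extend} --- is missing, and the route you gesture at in its place would fail.
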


In Section~\ref{sec:large-poly-growth}, we give an example showing that the growth rate given in this proposition is optimal. 

\begin{proof}
By Lemma~\ref{lemma:k-transitive}, there exist $y_1,\dots,y_{k-1}\in X$ such that the union of the orbits $\mathcal{O}(y_1)\cup\mathcal{O}(y_2)\cup\cdots\cup\mathcal{O}(y_{k-1})$ is dense in $X$.  Let $x_1,\dots,x_q\in X$ be a set of minimum cardinality for which $\mathcal{O}(x_1)\cup\mathcal{O}(x_2)\cup\cdots\cup\mathcal{O}(x_q)$ is dense.  

For $1\leq i\leq q$, define the constant 
$$
C_i:=\inf\{|w|\colon\text{$w$ is a subword of $x_i$ and $[w]_0^+$ contains precisely one element}\} 
$$
and define $C_i:=0$ if no such subword exists.  Define 
\begin{equation}\label{eq:constant}
C:=\max_{1\leq i\leq q}C_i.  
\end{equation}
Fix $R\in\N$.  
For $i=1,\ldots, q$, let $\tilde{w}_i$ be a factor of $x_i$ such that 
	\begin{enumerate} 
	\item $|\tilde{w}_i|\geq3R+1$; 
	\item \label{it:two}  for all $u\in\mathcal{L}_{2R+1}(X)$, there exists $i$ such that $u$ is a factor of $\tilde{w}_i$.  
	\end{enumerate}
Note that~\eqref{it:two} is possible since $\mathcal{O}(x_1)\cup\mathcal{O}(x_2)\cup\cdots\cup\mathcal{O}(x_q)$ is dense.

Without loss of generality, we can assume that there exists $M_1\geq0$ such that $[\tilde{w}_i]_0^+$ contains precisely one element for all $i\leq M_1$ and contains at least two elements for all $i>M_1$ (otherwise reorder $x_1,\dots,x_{k-1}$).  For each $i>M_1$, either there exists $a\geq0$ such that $\tilde{w}_i$ extends uniquely to the right $a$ times but not $a+1$ times, or there exists $a\geq0$ such that $\tilde{w}_i$ extends uniquely to the left $a$ times but not $a+1$ times.  Again, reordering if necessary, we can assume that there exists $M_2\geq M_1$ such that the former occurs for all $M_1<i\leq M_2$ and the latter occurs when $i>M_2$.  
For $i=1, \ldots, q$, we define words $w_1,\dots,w_q$ as follows: 
	\begin{enumerate}
	\item For $i=1, \ldots, M_1$,  the set $[\tilde{w}_i]_0^+$ contains precisely one element.  This must be a shift of $x_i$, 
	and without loss, we can assume it is $x_i$ itself.  In this case, we define $u_i$ to be the shortest subword of $x_i$ with the property that $[u_i]_0^+$ contains precisely one element and define $w_i$ to be the (unique) extension $2R+2$ times both to the right and to the left of $u_i$.  Observe that if $\varphi, \varphi^{-1}\in\Aut_R(X)$, then $\varphi^{-1}(\varphi(w_i))=u_i$.  
Since $\varphi^{-1}$ is injective and sends every element of $[\varphi(w_i)]_0^+$ to the one point set $[u_i]_0^+$, 
it follows that $[\varphi(w_i)]_0^+$ contains precisely one element 
and the word $\varphi(w_i)$ uniquely determines the word $\varphi(\tilde{w}_i)$.  
Moreover, $|u_i|\leq C$, where $C$ is the constant in~\eqref{eq:constant}, and so $|w_i|\leq C+4R+4$.
	\item For $i=M_1+1, \dots, M_2$, there exists $a_i\geq0$ such that $\tilde{w}_i$ extends uniquely to the right $a_i$ times but not $a_i+1$ times.  Define $w_i$ to be the (unique) word of length $|\tilde{w}_i|+a_i$ which has $\tilde{w}_i$ as its leftmost factor.  By choice of the ordering, $w_i$ does not extend uniquely to its right.
	\item  For $i=M_2+1, \ldots, q$, there exists $a_i\geq0$ such that $\tilde{w}_i$ extends uniquely to the left $a_i$ times but not $a_i+1$ times.  Define $w_i$ to the be (unique) word of length $|\tilde{w}_i|+a_i$ which has $\tilde{w}_i$ as its rightmost factor.  By choice of the ordering, $w_i$ does not extend uniquely to its left.
	\end{enumerate}

For $\varphi\in \Aut_R(X)$, 
we have that $\varphi(w_i)$ determines the word $\varphi(\tilde{w}_i)$ and so 
the block code determines what $\varphi$ does to every word in $\mathcal{L}_{2R+1}(X)$.  
Thus the map $\Phi\colon\Aut_R(X)\to\mathcal{L}_{|w_1|-2R}(X)\times\mathcal{L}_{|w_2|-2R}(X)\times\cdots\times\mathcal{L}_{|w_q|-2R}(X)$ defined by 
$$
\Phi(\varphi)=\bigl(\varphi(w_1),\varphi(w_2),\dots,\varphi(w_q)\bigr)
$$
is injective.
We claim that for $1\leq i\leq q$, we have 
\begin{equation}\label{eq:words}
\left|\left\{\varphi(w_i)\colon\varphi,\varphi^{-1}\in\Aut_R(X)\right\}\right|\leq Bk(C+2)(R+1) , 
\end{equation}
where $B$ is the constant appearing in Corollary~\ref{corollary:extend} and $C$ is the constant in~\eqref{eq:constant}.  

Before proving the claim, we show how to deduce the proposition from this estimate.  It follows from~\eqref{eq:words} that $|\{\Phi(\varphi)\colon\varphi, \varphi^{-1}\in\Aut_R(X)\}|\leq(Bk(C+2))^q(R+1)^q$.  Since $\Phi$ is injective, it follows that 
we have the bound
\begin{equation}\label{eq:growth-estimate}
|\{\varphi\in\Aut_R(X)\colon\varphi^{-1}\in\Aut_R(X)\}|\leq(Bk(C+2))^q(R+1)^q.  
\end{equation}

Given $\varphi_1,\dots,\varphi_m\in\Aut(X)$, choose $R\in\N$ such that $\varphi_1,\dots,\varphi_m,\varphi_1^{-1},\dots,\varphi_m^{-1}\in\Aut_R(X)$.  Then for any $n\in\N$, any $e_1,\dots,e_n\in\{-1,1\}$, and any $f_1,\dots,f_n\in\{1,\dots,m\}$, we have 
$$
\varphi_{f_1}^{e_1}\circ\varphi_{f_2}^{e_2}\circ\cdots\circ\varphi_{f_n}^{e_n}\in\Aut_{nR}(X).    
$$
In particular, if $\mathcal{S}:=\{\varphi_1,\dots,\varphi_m,\varphi_1^{-1},\dots,\varphi_m^{-1}\}$ is 
a (symmetric) generating set for $\langle\varphi_1,\dots,\varphi_m\rangle$, then any reduced word of length $n$ (with respect to $\mathcal{S}$) is an element of $\{\varphi\in\Aut_{nR}(X)\colon\varphi^{-1}\in\Aut_{nR}(X)\}$.  By~\eqref{eq:growth-estimate}, there are at most $(Bk(C+2))^q(nR+1)^q$ such words.  Therefore $\langle\varphi_1,\dots,\varphi_m\rangle$ is a group of polynomial growth and its polynomial growth rate is at most $q$.  This holds for any finitely generated subgroup of $\Aut(X)$ (where the parameter $R$ depends on the subgroup and choice of generating set, but $B$, $C$, $k$, and $q$ depend only on the shift $(X,\sigma)$).  As $q\leq k-1$, the proposition follows.

We are left with showing that~\eqref{eq:words} holds.  
There are three cases to consider, depending on the interval in which $i$ lies. 

\begin{enumerate}
\item Suppose $1\leq i\leq M_1$.  
Then $|w_i|\leq C+4R+4$ and so $\varphi(w_i)$ is a word of length $C+2R+2$.  Therefore, there are 
$$
p_X(C+2R+2)\leq k\cdot(C+2R+2)\leq k(C+2)(R+1) 
$$
possibilities for the word $\varphi(w_i)$.

\item \label{case:two} 
Suppose $M_1<i\leq M_2$.  Then $w_i$ does not extend uniquely to its right.  If $\varphi\in\Aut(X)$ is such that $\varphi, \varphi^{-1}\in\Aut_R(X)$, then the word $\varphi(w_i)\in\mathcal{L}_{|w_i|-2R}(X)$ cannot extend uniquely $R+1$ times to its right (as otherwise this extended word would have length $2R+1$ and applying $\varphi^{-1}$ to it would show that there is only one possible extension of $w_i$ to its right).  By Corollary~\ref{corollary:extend}, there are at most $B(R+1)$ such words.  Therefore $\{\varphi(w_i)\colon\varphi,\varphi^{-1}\in\Aut_R(X)\}$ has at most $B(R+1)$ elements.

\item Suppose $i>M_2$.  Then $w_i$ does not extend uniquely to its left.  As in Case~\eqref{case:two}, if $\varphi\in\Aut(X)$ is such that $\varphi, \varphi^{-1}\in\Aut_R(X)$, then $\varphi(w_i)$ cannot extend uniquely $R+1$ times to its left.  By Corollary~\ref{corollary:extend}, there are at most $B(R+1)$ such words.  Therefore $\{\varphi(w_i)\colon\varphi,\varphi^{-1}\in\Aut_R(X)\}$ has at most $B(R+1)$ elements.  
\end{enumerate}
This establishes~\eqref{eq:words}, and thus the proposition. 
\end{proof}

\subsection{The automorphism group of a transitive shift}
\label{subsec:transitive}

\begin{lemma}\label{lemma:transitive-finite-index}
Suppose $(X,\sigma)$ is a transitive shift and there exists $k\in\N$ such that 
$$
\limsup_{n\to\infty}\frac{P_X(n)}{n}<k.  
$$
If $x_0\in X$ has a dense orbit, then the set 
$$
\left\{\varphi(x_0)\colon\varphi\in\Aut(X)\right\} 
$$
is contained in the union of finitely many distinct orbits.
\end{lemma}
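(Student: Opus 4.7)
The plan is to prove the (a priori stronger) statement that $[\Aut(X):\langle\sigma\rangle]<\infty$; this implies the lemma because the map $\varphi\langle\sigma\rangle\mapsto\mathcal{O}(\varphi(x_0))$ is well-defined and injective. To see injectivity, suppose $\varphi_1(x_0)$ and $\varphi_2(x_0)$ lie in the same orbit, say $\varphi_1(x_0)=\sigma^n\varphi_2(x_0)$ for some $n\in\Z$. Since every automorphism commutes with $\sigma$, we obtain $(\varphi_2^{-1}\varphi_1)(x_0)=\sigma^n(x_0)$; because $\varphi_2^{-1}\varphi_1$ and $\sigma^n$ both commute with $\sigma$ and agree at $x_0$, they agree on the dense set $\mathcal{O}(x_0)$ and hence, by continuity, on all of $X$. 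Thus $\varphi_1\langle\sigma\rangle=\varphi_2\langle\sigma\rangle$.

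If $X$ is finite the lemma is immediate, so I assume $X$ is infinite; then $x_0$ is aperiodic and its dense orbit is a dense set of aperiodic points, so Proposition~\ref{prop:polynomial} applies with $q=1$. The bound \eqref{eq:growth-estimate} from the proof of that proposition specializes to
\begin{equation*}
\bigl|\{\varphi\in\Aut_R(X):\varphi^{-1}\in\Aut_R(X)\}\bigr|\leq Bk(C+2)(R+1)
\end{equation*}
for every $R\in\N$, where $B$, $C$, $k$ are the constants fixed there. To convert this into a bound on cosets, let $\varphi_1,\dots,\varphi_N\in\Aut(X)$ represent $N$ distinct cosets of $\langle\sigma\rangle$, and pick $R_0$ so that each $\varphi_i$ and each $\varphi_i^{-1}$ lies in $\Aut_{R_0}(X)$. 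For $R\geq R_0$ and $|n|\leq R-R_0$, both $\varphi_i\sigma^n$ and $\sigma^{-n}\varphi_i^{-1}$ lie in $\Aut_R(X)$, and the $N(2(R-R_0)+1)$ elements $\{\varphi_i\sigma^n:1\leq i\leq N,\ |n|\leq R-R_0\}$ are pairwise distinct, since $\varphi_i\sigma^n=\varphi_j\sigma^m$ forces $\varphi_j^{-1}\varphi_i\in\langle\sigma\rangle$, whence $i=j$ and $n=m$. Combining with the displayed inequality and letting $R\to\infty$ yields $2N\leq Bk(C+2)$, a bound on $N$ depending only on the shift.

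The main conceptual step is the first paragraph: the dense-orbit hypothesis is exactly what promotes agreement of two automorphisms at the single point $x_0$ to equality on all of $X$, and this is where transitivity is used decisively. The rest is a rigidity observation — each coset of $\langle\sigma\rangle$ already contributes linearly many distinct elements (parametrized by small powers of $\sigma$) to the left-hand side of the growth inequality, while the right-hand side is also linear in $R$, so only finitely many cosets can coexist.
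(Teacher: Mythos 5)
Your proposal is correct and follows essentially the same route as the paper: both arguments specialize the growth estimate \eqref{eq:growth-estimate} from Proposition~\ref{prop:polynomial} to the transitive case $q=1$, and both derive the bound by observing that $N$ ``independent'' automorphisms composed with the powers $\sigma^n$, $|n|\leq R-R_0$, yield roughly $2NR$ distinct elements of $\Aut_R(X)$, which must be dominated by the linear bound $Bk(C+2)(R+1)$. The only cosmetic difference is that you index the count by cosets of $\langle\sigma\rangle$ (thereby proving the finite-index statement of Theorem~\ref{thm:transitive} directly), whereas the paper indexes it by the orbits of the points $\varphi_i(x_0)$ and deduces the finite-index statement afterwards; the two bookkeeping schemes are identified by your (correct) observation that the orbit of $\varphi(x_0)$ determines the coset $\varphi\langle\sigma\rangle$.
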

\begin{proof}
Suppose not.  Let $\varphi_1,\varphi_2,\ldots\in\Aut(X)$ be such that $\varphi_i(x_0)\notin\mathcal{O}(\varphi_j(x_0))$ whenever $i\neq j$.  
For $N\in\N$, let $R(N)$ be the smallest integer such that we have $\varphi_1,\dots,\varphi_N,\varphi_1^{-1},\dots,\varphi_N^{-1}\in\Aut_{R(N)}(X)$.  For $1\leq i\leq N$, $m\in\N$, and $-n\leq j\leq n$, we have $\varphi_i^{\pm1}\circ\sigma^j\in\Aut_{R(N)+n}(X)$.  As automorphisms take aperiodic points to aperiodic points, for fixed $i$, the set 
$$
\{\varphi_i\circ\sigma^j\colon-n\leq j\leq n\} 
$$
contains $2n+1$ elements.  If $i_1\neq i_2$ and $-n\leq j_1,j_2\leq n$, then $\varphi_{i_1}\circ\sigma^{j_1}(x_0)\notin\mathcal{O}(\varphi_{i_2}\circ\sigma^{j_2}(x_0))$.  Thus the set 
$$
\{\varphi_i\circ\sigma^j\colon 1\leq i\leq N\text{ and }-n\leq j\leq n\} 
$$
contains $2Nn+N$ elements.  Therefore, 
$$
|\{\varphi\in\Aut_{R(N)+n}(X)\colon\varphi^{-1}\in\Aut_{R(N)+n}(X)\}|\geq2Nn+N.  
$$
It follows that 
$$
\limsup_{R\to\infty}\frac{|\{\varphi\in\Aut_R(X)\colon\varphi^{-1}\in\Aut_R(X)\}|}{R}\geq 2N.  
$$
Since $N\in\N$ was arbitrary, we have 
\begin{equation}\label{eq:slow-growth}
\limsup_{R\to\infty}\frac{|\{\varphi\in\Aut_R(X)\colon\varphi^{-1}\in\Aut_R(X)\}|}{R}=\infty.   
\end{equation}

On the other hand, since $(X,\sigma)$ is transitive, the parameter $q$ in the conclusion of Proposition~\ref{prop:polynomial} is $1$.  
Then by~\eqref{eq:growth-estimate}, 
we have 
$$|\{\varphi\in\Aut_R(X)\colon\varphi^{-1}\in\Aut_R(X)\}\leq Bk(C+2)(R+1), 
$$
where $B,k,C$ are as in Proposition~\ref{prop:polynomial}, which depend only on the shift $(X,\sigma)$ and not on $R$.  This estimate holds for any $R\in\N$, a contradiction of~\eqref{eq:slow-growth}.
\end{proof}


We use this to complete the proof of Theorem~\ref{thm:transitive}, characterizing 
the automorphism group of transitive shifts of linear growth:
\begin{proof}[Proof of Theorem~\ref{thm:transitive}]
Assume $(X,\sigma)$ is a transitive shift satisfying
$$\limsup_{n\to\infty}P_X(n)/n<k$$ for some $k\in\N$.  
An automorphism in a transitive shift is determined by the image of a point whose orbit is dense, and so 
Lemma~\ref{lemma:transitive-finite-index}  implies that 
the group $\Aut(X)/\Aut(X)\cap[\sigma]$ is finite (Lemma~\ref{lemma:normal} implies that $\Aut(X)\cap[\sigma]$ is normal in $\Aut(X)$).  However, the only orbit preserving automorphisms in a transitive shift are elements of $\langle\sigma\rangle$, since such an automorphism acts like a power of the shift on a point whose orbit is dense.
\end{proof}

Theorem~\ref{thm:transitive} shows that if $(X,\sigma)$ is transitive and has low enough complexity, then $\Aut(X)$ is highly constrained.  One might hope to have a converse to this theorem: if $(X,\sigma)$ is transitive and is above some ``complexity threshold'' then $\Aut(X)$ is nontrivial.  In Section~\ref{sec:no-complexity-threshold}, we give an example showing that no such converse holds.

\subsection{The automorphism group of a shift with dense aperiodic points} 
\label{sec:dense-aperiodic}

\begin{lemma}\label{lemma:aperiodic-finite}
Suppose $(X,\sigma)$ has a dense set of aperiodic points and there exists $k\in\N$ such that 
$$
\limsup_{n\to\infty}\frac{P_X(n)}{n}<k.  
$$
Let $x_1,\dots,x_q\in X$ be a set (of minimal cardinality) such that $\mathcal{O}(x_1)\cup\cdots\cup\mathcal{O}(x_q)$ is dense in $X$.  Then for each $1\leq i\leq q$, the set 
$$
\left\{\varphi(x_i)\colon\varphi\in\Aut(X)\right\} 
$$
is contained in the union of finitely many distinct orbits.
\end{lemma}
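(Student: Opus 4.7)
The plan is to reduce to the transitive case (Lemma~\ref{lemma:transitive-finite-index}) by passing to the orbit closure $A := \overline{\mathcal{O}(x_i)}$, which is a transitive subshift of $X$ with $p_A(n) \le p_X(n) < kn$ eventually. For each $\varphi \in \Aut(X)$ the image $\varphi(A) = \overline{\mathcal{O}(\varphi(x_i))}$ is also a transitive subshift of $X$. Setting $H := \{\varphi \in \Aut(X) : \varphi(A) = A\}$, restriction gives a homomorphism $H \to \Aut(A)$; applying Lemma~\ref{lemma:transitive-finite-index} to the transitive system $(A,\sigma)$ shows that $\{\psi(x_i) : \psi \in \Aut(A)\}$ meets only finitely many $A$-orbits, and hence $\{\varphi(x_i) : \varphi \in H\}$ meets only finitely many $\sigma$-orbits of $X$. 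If I can moreover show that the collection $\mathcal{F} := \{\varphi(A) : \varphi \in \Aut(X)\}$ is finite, the proof is complete: $\Aut(X)$ is the disjoint union of the finitely many cosets $\varphi_B H$ for $B \in \mathcal{F}$ (with $\varphi_B(A) = B$), and on each coset $\{\varphi(x_i) : \varphi \in \varphi_B H\} = \varphi_B(\{h(x_i) : h \in H\})$ lies in finitely many $\sigma$-orbits since $\varphi_B$ commutes with $\sigma$.

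The main obstacle is therefore showing $|\mathcal{F}| < \infty$, equivalently $[\Aut(X):H] < \infty$. I would attempt this by contradiction, replaying the counting strategy of Lemma~\ref{lemma:transitive-finite-index}. Assume $\mathcal{F}$ is infinite and pick $\varphi_1, \varphi_2, \ldots \in \Aut(X)$ with the $\varphi_j(A)$ pairwise distinct. Let $R(N)$ be minimal such that $\varphi_j, \varphi_j^{-1} \in \Aut_{R(N)}(X)$ for $j \le N$. The compositions $\varphi_j \circ \sigma^m$ for $1 \le j \le N$ and $-n \le m \le n$ are pairwise distinct elements of $\Aut_{R(N)+n}(X)$ (with inverses in the same range): an equality $\varphi_{j_1}\sigma^{m_1} = \varphi_{j_2}\sigma^{m_2}$ evaluated at $x_i$ would give $\varphi_{j_1}(x_i) \in \mathcal{O}(\varphi_{j_2}(x_i))$, hence $\varphi_{j_1}(A) = \varphi_{j_2}(A)$ and so $j_1 = j_2$. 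This produces the lower bound
\[
|\{\varphi \in \Aut_{R(N)+n}(X) : \varphi^{-1} \in \Aut_{R(N)+n}(X)\}| \ge N(2n+1).
\]

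For $q = 1$ this directly contradicts the linear upper bound of Proposition~\ref{prop:polynomial} as in Lemma~\ref{lemma:transitive-finite-index}. For general $q \ge 2$, Proposition~\ref{prop:polynomial} only provides a degree-$q$ polynomial upper bound, so the linear-in-$n$ lower bound above is insufficient. To close the gap I would sharpen the lower bound by combining the per-coordinate estimate $|\{\varphi(w_\ell) : \varphi \in S_R\}| \le Bk(C+2)(R+1)$ from the proof of Proposition~\ref{prop:polynomial} with the rank-$\le q$ abelian structure of $\Aut(X) \cap [\sigma]$ guaranteed by Lemmas~\ref{lemma:normal} and~\ref{lemma:abelian}: distinct subshifts $\varphi_j(A)$ should force distinct signatures $\varphi_j(w_i)$ (yielding $N \le O(R(N))$), and composing the $\varphi_j$'s with $q-1$ independent elements of $\Aut(X)\cap[\sigma]$ contributes $q-1$ additional dimensions of polynomial growth, pushing the count past Proposition~\ref{prop:polynomial}'s bound. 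Making the per-coordinate distinctness rigorous in the cases of Proposition~\ref{prop:polynomial}'s proof where $x_i$ is not isolated will be the crux of the argument.
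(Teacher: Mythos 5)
Your overall architecture is the same as the paper's: restrict to the transitive subsystem $\overline{\mathcal{O}}(x_i)$, invoke Lemma~\ref{lemma:transitive-finite-index} on the stabilizer of that subsystem, and then argue that the stabilizer has finite index, i.e.\ that $\mathcal{F}=\{\varphi(\overline{\mathcal{O}}(x_i))\colon\varphi\in\Aut(X)\}$ is finite. But you leave exactly that last step open, and the counting strategy you sketch for it does not close the gap. As you yourself note, for $q\geq 2$ Proposition~\ref{prop:polynomial} only gives a degree-$q$ polynomial upper bound on $|\{\varphi\in\Aut_R(X)\colon\varphi^{-1}\in\Aut_R(X)\}|$, while your construction only produces $N(2n+1)$ distinct automorphisms in $\Aut_{R(N)+n}(X)$ --- a quantity that is perfectly compatible with a degree-$q$ bound, so no contradiction arises. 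The proposed repair (composing with ``$q-1$ independent elements of $\Aut(X)\cap[\sigma]$'') assumes $\Aut(X)\cap[\sigma]$ has rank at least $q-1$, which nothing guarantees: Lemma~\ref{lemma:rank} gives only an upper bound $d<k$ on the rank, and $\Aut(X)\cap[\sigma]$ could be as small as $\langle\sigma\rangle\cong\Z$. So the ``crux'' you defer is a genuine missing idea, not a technicality.

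The step you are missing is much softer and uses the \emph{minimality of $q$} rather than any counting. Since $q$ is minimal, $x_i\notin\bigcup_{j\neq i}\overline{\mathcal{O}}(x_j)$, so there is a word $w_i$ with $[w_i]_0^+\cap\mathcal{O}(x_i)\neq\emptyset$ but $[w_i]_0^+\cap\overline{\mathcal{O}}(x_j)=\emptyset$ for $j\neq i$; since the orbits are jointly dense, this forces $[w_i]_0^+\subseteq\overline{\mathcal{O}}(x_i)$. Now for any $\varphi\in\Aut(X)$, surjectivity forces $\mathcal{O}(\varphi(x_{j}))\cap[w_i]_0^+\neq\emptyset$ for some $j$, hence $\varphi(x_{j})\in\overline{\mathcal{O}}(x_i)$ and so $\varphi(\overline{\mathcal{O}}(x_{j}))\subseteq\overline{\mathcal{O}}(x_i)$; moreover such a $j$ can serve only one $i$, so $i\mapsto j$ is a permutation of $\{1,\dots,q\}$. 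Consequently every $\varphi$ permutes the sets $\overline{\mathcal{O}}(x_1),\dots,\overline{\mathcal{O}}(x_q)$, and your $\mathcal{F}$ is a subset of $\{\overline{\mathcal{O}}(x_1),\dots,\overline{\mathcal{O}}(x_q)\}$, hence finite with at most $q!$ cosets. With that observation in hand, the rest of your argument (factoring through the finitely many cosets of $H$ and applying Lemma~\ref{lemma:transitive-finite-index} on each $\overline{\mathcal{O}}(x_i)$) is exactly the paper's proof.
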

\begin{proof}
By minimality of the set $\{x_1,\dots,x_q\}$, we have 
$$
x_i\notin\bigcup_{j\neq i}\overline{\mathcal{O}}(x_j) 
$$
for any $1\leq i\leq q$.  
Therefore there exists $w_i\in\mathcal{L}(X)$ such that $[w_i]_0^+\cap\mathcal{O}(x_i)\neq\emptyset$ but $[w_i]_0^+\cap\bigcup_{j\neq i}\overline{\mathcal{O}}(x_j) =\emptyset$.  This implies that $[w_i]_0^+\subseteq\overline{\mathcal{O}}(x_i)$.  

Let $\varphi\in\Aut(X)$ and note that $\varphi$ is determined by $\varphi(x_1),\dots,\varphi(x_q)$.  
If for some $1\leq i\leq q$ we have $\mathcal{O}(\varphi(x_j))\cap[w_i]_0^+=\emptyset$ for all $j$, then $\varphi(X)\cap[w_i]_0^+=\emptyset$ and $\varphi$ is not surjective, a contradiction.  
Therefore, for each $i$ there exists $1\leq j_i\leq q$ such that $\mathcal{O}(\varphi(x_{j_i}))\cap[w_i]_0^+\neq\emptyset$.  
By construction, if $\mathcal{O}(\varphi(x_{j_i}))\cap[w_i]_0^+\neq\emptyset$, then $\varphi(x_{j_i})\in\overline{\mathcal{O}}(x_i)$ and so $\mathcal{O}(\varphi(x_{j_i}))\cap[w_k]_0^+=\emptyset$ for any $k\neq i$.  That is, the map $i\mapsto j_i$ is a permutation on the set $\{1,2,\dots,q\}$.  
Let $\pi_{\varphi}\in S_q$, where $S_q$ is the symmetric group on $q$ letters, denote this permutation.  

Let 
$$
H:=\{\pi_{\varphi}\colon\varphi\in\Aut(X)\}\subseteq S_q.  
$$
For each $h\in H$, choose $\varphi_h\in\Aut(X)$ such that $h=\pi_{\varphi_h}$.  
Then if $\varphi\in\Aut(X)$ and $h=\pi_{\varphi}$, the permutation induced by $\varphi_h^{-1}\circ\varphi$ is the identity.  It follows that $\varphi_h^{-1}\circ\varphi$ preserves each of the sets $\overline{\mathcal{O}}(x_1),\dots,\overline{\mathcal{O}}(x_q)$.  Consequently, for each $1\leq i\leq q$, the restriction of $\varphi_h^{-1}\circ\varphi$ to $\overline{\mathcal{O}}(x_i)$ is an automorphism of the (transitive) subsystem $\left(\overline{\mathcal{O}}(x_i),\sigma\right)$.  By Lemma~\ref{lemma:transitive-finite-index}, the set $\{\psi(x_i)\colon\psi\in\Aut(\overline{\mathcal{O}}(x_i))\}$ is contained in the union of finitely many distinct orbits.  Therefore, the set $\{\varphi_{\pi_{\varphi}}^{-1}\circ\varphi(x_i)\colon\varphi\in\Aut(X)\}$ is contained in the union of finitely many distinct orbits.  Since 
$$
\{\varphi(x_i)\colon\varphi\in\Aut(X)\}\subseteq\bigcup_{h\in H}\varphi_h\left(\{\varphi_{\pi_{\varphi}}^{-1}\circ\varphi(x_i)\colon\varphi\in\Aut(X)\}\right)
$$
and automorphisms take orbits to orbits, it follows that $\{\varphi(x_i)\colon\varphi\in\Aut(X)\}$ is contained in the union of finitely many distinct orbits.
\end{proof}

\begin{lemma}\label{lemma:rank}
Let $(X,\sigma)$ be a shift with a dense set of aperiodic points and assume that there exists $k\in\N$ such that 
$$
\limsup_{n\to\infty}\frac{P_X(n)}{n}<k.  
$$
Then $\Aut(X)\cap[\sigma]\cong\Z^d$ for some $d<k$.
\end{lemma}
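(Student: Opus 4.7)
The plan is to construct an explicit injective homomorphism from $\Aut(X)\cap[\sigma]$ into $\Z^q$, where $q$ is the minimal cardinality from Proposition~\ref{prop:polynomial} (with $q\leq k-1$ by Lemma~\ref{lemma:k-transitive}). Since every subgroup of $\Z^q$ is free abelian of rank at most $q$, this will immediately yield $\Aut(X)\cap[\sigma]\cong\Z^d$ for some $d\leq q<k$.

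Fix $x_1,\dots,x_q\in X$ of minimal cardinality such that $\mathcal{O}(x_1)\cup\cdots\cup\mathcal{O}(x_q)$ is dense in $X$. The first and main step is to show that each $x_i$ is aperiodic. Suppose for contradiction that $x_i$ has period $p$, so $\overline{\mathcal{O}}(x_i)=\{x_i,\sigma x_i,\dots,\sigma^{p-1}x_i\}$ is finite. Density of aperiodic points furnishes a sequence of aperiodic points $y_n\to x_i$. Since $X=\bigcup_{j=1}^q\overline{\mathcal{O}}(x_j)$, by the pigeonhole principle there exist an index $j$ and a subsequence with $y_{n_k}\in\overline{\mathcal{O}}(x_j)$ for all $k$, forcing $x_i\in\overline{\mathcal{O}}(x_j)$ because the latter is closed. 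If $j=i$, the $y_{n_k}$ would be aperiodic points sitting inside the finite set $\overline{\mathcal{O}}(x_i)$, which is impossible. If $j\neq i$, then $\sigma$-invariance of $\overline{\mathcal{O}}(x_j)$ upgrades $x_i\in\overline{\mathcal{O}}(x_j)$ to $\overline{\mathcal{O}}(x_i)\subseteq\overline{\mathcal{O}}(x_j)$, allowing us to drop $x_i$ from the list and contradicting minimality of $q$.

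With aperiodicity in hand, for any $\psi\in\Aut(X)\cap[\sigma]$ the image $\psi(x_i)$ lies in $\mathcal{O}(x_i)$, and aperiodicity of $x_i$ guarantees a \emph{unique} integer $n_i(\psi)\in\Z$ with $\psi(x_i)=\sigma^{n_i(\psi)}(x_i)$. Define
$$
\Phi\colon\Aut(X)\cap[\sigma]\to\Z^q,\qquad \Phi(\psi):=\bigl(n_1(\psi),\dots,n_q(\psi)\bigr).
$$
This is a group homomorphism: since any automorphism commutes with $\sigma$,
$$
(\psi_1\circ\psi_2)(x_i)=\psi_1\bigl(\sigma^{n_i(\psi_2)}x_i\bigr)=\sigma^{n_i(\psi_2)}\psi_1(x_i)=\sigma^{n_i(\psi_1)+n_i(\psi_2)}(x_i),
$$
so $n_i(\psi_1\circ\psi_2)=n_i(\psi_1)+n_i(\psi_2)$ (compatibly with Lemma~\ref{lemma:abelian}). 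For injectivity, if $\Phi(\psi)=0$ then $\psi(x_i)=x_i$ for every $i$, and because $\psi$ commutes with $\sigma$ this gives $\psi=\Id$ on each orbit $\mathcal{O}(x_i)$; continuity of $\psi$ together with density of $\bigcup_i\mathcal{O}(x_i)$ then forces $\psi=\Id$ on all of $X$.

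Consequently $\Aut(X)\cap[\sigma]$ embeds as a subgroup of $\Z^q$, hence is free abelian of some rank $d\leq q<k$, which is the desired conclusion. The main obstacle is the aperiodicity step: without it, $n_i(\psi)$ is only defined modulo the period of $x_i$ and the target group $\Z^q$ must be replaced by a product of finite cyclic groups, destroying the argument. The pigeonhole-with-minimality trick is what resolves this, and it essentially uses the two hypotheses (dense aperiodic points and minimality of $q$) simultaneously.
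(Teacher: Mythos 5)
Your proof is correct and follows essentially the same route as the paper: both construct the injective homomorphism $\psi\mapsto(n_1(\psi),\dots,n_q(\psi))$ into $\Z^q$ using points whose orbits have dense union, and conclude via the classification of subgroups of free abelian groups. Your additional step verifying that each $x_i$ is aperiodic (so that the exponents $n_i(\psi)$ are well defined in $\Z$ rather than only modulo a period) is a worthwhile piece of care that the paper's terser argument leaves implicit.
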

\begin{proof}
By Lemmas~\ref{lemma:normal} and~\ref{lemma:abelian}, $\Aut(X)\cap[\sigma]$ is abelian and normal in $\Aut(X)$.  By Lemma~\ref{lemma:k-transitive}, there exist points $x_1,\dots,x_{k-1}\in X$ such that $\mathcal{O}(x_1)\cup\cdots\cup\mathcal{O}(x_{k-1})$ is dense in $X$.  If $\varphi\in\Aut(X)\cap[\sigma]$, then there exist $e_1(\varphi),\dots,e_{k-1}(\varphi)\in\Z$ such that $\varphi(x_i)=\sigma^{e_i(\varphi)}(x_i)$ for all $1\leq i\leq q$.  As an automorphism is determined by the images of $x_1,\dots,x_{k-1}$, the map $\varphi\mapsto(e_1(\varphi),\dots,e_{k-1}(\varphi))$ is an injective homomorphism from $\Aut(X)\cap[\sigma]$ to $\Z^{k-1}$.  
\end{proof}

\begin{proof}[Proof of Theorem~\ref{th:finitely-generated}]
By Lemma~\ref{lemma:k-transitive}, there exist $x_1,\dots,x_{k-1}\in X$ such that $\mathcal{O}(x_1)\cup\cdots\cup\mathcal{O}(x_{k-1})$ is dense in $X$.  If $\varphi\in\Aut(X)$, then $\varphi$ is determined by the values of $\varphi(x_1),\dots,\varphi(x_{k-1})$.  By Lemma~\ref{lemma:aperiodic-finite}, the set $\{\varphi(x_i)\colon\varphi\in\Aut(X)\}$ is contained in the union of finitely many distinct orbits in $X$.  Therefore, modulo orbit preserving automorphisms, there are only finitely many choices for  $\varphi(x_1),\dots,\varphi(x_{k-1})$.  It follows that the group $\Aut(X)/\Aut(X)\cap[\sigma]$ is finite.  By Lemma~\ref{lemma:rank}, $\Aut(X)\cong\Z^d$ for some $d<k$.
\end{proof}

\section{General shifts of linear growth}
\label{sec:general-linear}

\begin{lemma}\label{lemma:per-finite}
Suppose $(X,\sigma)$ is a shift and $w\in\mathcal{L}(X)$ is such that $[w]_0^+$ is infinite.  Then there exists aperiodic $x_w\in X$ such that $x_w\in[w]_0^+$.
\end{lemma}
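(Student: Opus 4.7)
The plan is to use compactness of $[w]_0^+$ to extract a limit point and then, if that limit is periodic, to modify it via a shift-and-take-limit construction to produce an aperiodic element. Since $[w]_0^+$ is a closed subset of the compact space $X$, it is itself compact; being infinite, it admits an accumulation point $x^*\in[w]_0^+$ together with a sequence of distinct points $x_k\in[w]_0^+$ with $x_k\neq x^*$ and $x_k\to x^*$. If $x^*$ is aperiodic, we are done by setting $x_w=x^*$.

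Otherwise, $x^*$ is periodic; let $p^*$ be its minimal period. For each $k$, define $d_k:=\min\{|n|:x_k(n)\neq x^*(n)\}$, so $d_k<\infty$ and $d_k\to\infty$. By passing to a subsequence I may assume the disagreement always sits on the positive side, i.e.\ $x_k(d_k)\neq x^*(d_k)$ for all $k$ (the negative-side case is entirely symmetric). Using compactness of $X$ and finiteness of $\A$, I further extract a subsequence along which $\sigma^{d_k}x_k$ converges to some $y^*\in X$, $d_k\equiv r\pmod{p^*}$ for a fixed $r$, and $x_k(d_k)=c$ for a fixed symbol $c\in\A$; by construction $c\neq x^*(r)$.

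Next I would show $y^*$ is aperiodic. For each fixed $n<0$, once $k$ is large enough one has $|n+d_k|<d_k$, so $x_k(n+d_k)=x^*(n+d_k)=x^*(n+r)$ by $p^*$-periodicity of $x^*$; letting $k\to\infty$ yields $y^*(n)=\sigma^r x^*(n)$ for all $n<0$. In contrast, $y^*(0)=c\neq x^*(r)=\sigma^r x^*(0)$. If $y^*$ had some period $q>0$, then $y^*(0)=y^*(-qp^*)$; but $-qp^*<0$ lies in the agreement region, so $y^*(-qp^*)=\sigma^r x^*(-qp^*)=x^*(r-qp^*)=x^*(r)$, contradicting $c\neq x^*(r)$. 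Thus $y^*$ is aperiodic.

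Finally I would locate $w$ inside $y^*$. Since $x^*(0)\cdots x^*(|w|-1)=w$ and $x^*$ has period $p^*$, the word $w$ occurs in $\sigma^r x^*$ at every position $j=mp^*-r$ for $m\in\Z$. Choosing $m$ sufficiently negative makes $j+|w|-1<0$, so the entire occurrence sits inside $\Z_{<0}$ where $y^*$ agrees with $\sigma^r x^*$; hence $w$ occurs in $y^*$ at some position $j<0$. Setting $x_w:=\sigma^j y^*$ gives a point of $X\cap[w]_0^+$ that is aperiodic, since aperiodicity is preserved by shifts. The main obstacle is ensuring aperiodicity of the limit: distinct periodic points of growing period can easily converge to a periodic limit, so one must carefully recenter at the first disagreement position $d_k$ in order to transfer the periodicity of $x^*$ on the agreeing half-line into a rigidity condition that the disagreement at $0$ contradicts.
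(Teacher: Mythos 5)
Your proof is correct, and it takes a genuinely different route from the paper's. The paper argues by a dichotomy on whether $w$ occurs syndetically, with a uniform gap, in every element of $[w]_0^+$: in the syndetic case it invokes the fact that the (infinite) subsystem generated by $[w]_0^+$ contains an aperiodic point and notes that syndeticity survives passage to limits, so some shift of that point lies in $[w]_0^+$; in the non-syndetic case it produces a point of $[w]_0^+$ in which $w$ occurs non-syndetically, which forces aperiodicity. You instead take an accumulation point $x^*$ of the infinite closed set $[w]_0^+$, and when $x^*$ is periodic you recenter the approximating sequence at the first disagreement position $d_k$, using pigeonhole on $d_k \bmod p^*$ and on the disagreeing symbol to get a limit $y^*$ that agrees with $\sigma^r x^*$ on a half-line but disagrees at the origin; any period of $y^*$ would propagate the half-line agreement to the origin via a multiple of $p^*$, giving a contradiction, and the periodicity of $x^*$ on the agreeing half-line supplies an occurrence of $w$ to shift to the origin. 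Every step checks out (in particular the inequality $|n+d_k|<d_k$ for $n<0$ and $d_k\geq|n|$, and the relocation of $w$ inside the agreement region). Your argument is more self-contained: it does not presuppose that an infinite subshift contains an aperiodic point, a fact the paper's first case quietly relies on, and it makes explicit how the aperiodic point is moved into $[w]_0^+$. The paper's version is shorter and conceptually tied to syndeticity, which is the notion reused elsewhere in that section.
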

\begin{proof}
Either $w$ occurs syndetically in every element of $[w]_0^+$ with a uniform bound on the gap, or there exists a sequence of elements of $[w]_0^+$ along which the gaps between occurrences of $w$ in $y_w$ grow.  

In the first case, the subsystem 
$$
\overline{\left\{\sigma^ix\colon x\in[w]_0^+\text{, }i\in\Z\right\}}
$$
is infinite and so contains an aperiodic point $x_w$.  Since $w$ occurs syndetically with the same bound in every element of $[w]_0^+$, it also occurs syndetically in any limit taken along elements of $[w]_0^+$, and in particular in $x_w$.

In the second case, there is an element of $x_w\in\overline{\mathcal{O}(y_w)}\cap[w]_0^+$ for which either $w$ occurs only finitely many times or infinitely many times with gaps tending to infinity in the semi-infinite word $\{x(n)\colon n\geq0\}$, or the same behavior occurs in the semi-infinite word $\{x(n)\colon n\leq0\}$.  In either case, $x_w$ is aperiodic.
\end{proof}

We use this to complete the proof of Theorem~\ref{thm:main}, characterizing the finitely generated subgroups of 
a shift of linear growth: 

\begin{proof}[Proof of Theorem~\ref{thm:main}]
Let  $(X,\sigma)$ be a shift and assume there exists $k\in\N$ such that 
$$
\limsup_{n\to\infty}\frac{P_X(n)}{n}<k.  
$$
Let 
$$
X_{NP}:=\overline{\left\{x\in X\colon\sigma^i(x)\neq x\text{ for all }i\neq0\right\}} 
$$
be the closure of the set of aperiodic points in $X$.  As automorphisms take aperiodic points to aperiodic points, every element of $\Aut(X)$ preserves $X_{NP}$.  Consequently, restriction to $X_{NP}$ defines 
a natural homomorphism $h\colon\Aut(X)\to\Aut(X_{NP})$. 

Let $\varphi_1,\dots,\varphi_N\in\Aut(X)$ and choose $R\in\N$ such that $\varphi_1,\dots,\varphi_N,\varphi_1^{-1},\dots,\varphi_N^{-1}\in\Aut_R(X)$.  By Lemma~\ref{lemma:k-transitive}, 
there exists a set $x_1,\dots,x_{k-1}\in X_{NP}$ such that 
$$
\mathcal{O}(x_1)\cup\cdots\cup\mathcal{O}(x_{k-1}) 
$$
is dense in $X_{NP}$.  Let $\{x_1,\dots,x_q\}\subseteq X_{NP}$ be a set of minimal cardinality with the property that 
$$
\mathcal{O}(x_1)\cup\cdots\cup\mathcal{O}(x_q) 
$$
is dense in $X_{NP}$.  Then for any $\varphi\in\langle\varphi_1,\dots,\varphi_N\rangle$, the restriction of $\varphi$ to $X_{NP}$ is determined by $\varphi(x_1),\dots,\varphi(x_q)$.  
By Lemma~\ref{lemma:aperiodic-finite}, for each $1\leq j\leq q$, the set 
$$
\{\varphi(x_j)\colon\varphi\in\langle\varphi_1,\dots,\varphi_N\rangle\} 
$$
is contained in the union of finitely many distinct orbits.  Therefore there exists a finite collection of automorphisms $\psi_1,\dots,\psi_M\in\langle\varphi_1,\dots,\varphi_N\rangle$ such that for any $\varphi\in\langle\varphi_1,\dots,\varphi_N\rangle$, there exists $1\leq t(\varphi)\leq M$ such that for all $1\leq j\leq q$, we have 
$$
\varphi(x_j)\in\mathcal{O}(\psi_{t(\varphi)}(x_j)).  
$$
Thus the restriction of $\psi_{t(\varphi)}^{-1}\circ\varphi$ to $X_{NP}$ is orbit preserving.  Let 
$$
K:=\{\varphi\in\langle\varphi_1,\dots,\varphi_N\rangle\colon\text{the restriction of $\varphi$ to $X_{NP}$ is orbit preserving}\}.  
$$
Clearly $K$ is a subgroup of $\langle\varphi_1,\dots,\varphi_N\rangle$.  

For each  $1\leq i\leq N$, we have that $\varphi_i$ is a block code of range $R$.  Let 
$$
\mathcal{W}_R:=\left\{w\in\mathcal{L}_{2R+1}(X)\colon[w]_0^+\cap X_{NP}=\emptyset\right\}.  
$$
Then by Lemma~\ref{lemma:per-finite}, the set 
$$
Y:=\bigcup_{w\in\mathcal{W}_R}[w]_0^+ 
$$
is finite.  Since every element of $Y$ is periodic and automorphisms preserve the minimal period of periodic points, the ($\langle\varphi_1,\dots,\varphi_N\rangle$-invariant) set 
$$
Z:=\left\{\varphi_{i_1}^{e_1}\circ\cdots\circ\varphi_{i_S}^{e_S}(y)\colon i_1,\dots,i_S\in\{1,\dots,N\}\text{, }e_1,\dots,e_S\in\{-1,1\},S\in\N\text{, }y\in Y\right\} 
$$
is finite.  For any $1\leq i\leq N$, the restriction of $\varphi_i$ to $X_{NP}$ uniquely determines the restriction of $\varphi_i$ to $X\setminus Z$ (since all words of length $2R+1$ that occur in elements of $X\setminus Z$ also occur in $X_{NP}$).  Since $\varphi_1,\dots,\varphi_N$ are automorphisms that preserve $Z$, they take elements of $X\setminus Z$ to elements of $X\setminus Z$.
Thus for any $\varphi\in\langle\varphi_1,\dots,\varphi_N\rangle$, the restriction of $\varphi$ to $X_{NP}$ uniquely determines the restriction of $\varphi$ to $X\setminus Z$.  In particular, this holds for all $\varphi\in K$.  So there exists a finite collection of automorphisms $\alpha_1,\dots,\alpha_T\in K$ such that for all $\varphi\in K$, 
there is an integer $1\leq s(\varphi)\leq T$ such that $\alpha_{s(\varphi)}^{-1}\circ\varphi$ acts trivially on $Z$.

With the functions $t(\varphi)$ and $s(\varphi)$ defined as above, we have that for any $\varphi\in\langle\varphi_1,\dots,\varphi_N\rangle$, the automorphism 
$$
\alpha^{-1}_{s\left(\psi^{-1}_{t(\varphi)}\circ\varphi\right)}\circ\psi^{-1}_{t(\varphi)}\circ\varphi 
$$
acts trivially on $Z$ and its restriction to $X_{NP}$ is orbit preserving.  Define $H\subseteq\langle\varphi_1,\dots,\varphi_N\rangle$ to be the subgroup of elements $\varphi\in\langle\varphi_1,\dots,\varphi_N\rangle$ 
such that $\varphi$ acts trivially on $Z$ and the restriction of $\varphi$ to $X_{NP}$ is orbit preserving. 
Every element of $H$ is uniquely determined by its restriction to $X_{NP}$, and so $H$ is isomorphic to a subgroup of $\Aut(X_{NP})\cap[\sigma]$.  By Lemma~\ref{lemma:rank}, this subgroup is isomorphic to $\Z^d$ for some $d<k$.  On the other hand, for any $\varphi\in\langle\varphi_1,\dots,\varphi_N\rangle$, there exist $1\leq t\leq M$ and $1\leq s\leq T$ such that $\alpha_s^{-1}\circ\psi_t^{-1}\circ\varphi\in H$.  Therefore $H$ has finite index in $\Aut(X)$.  

Finally, if $\varphi\in H$, then there is a function $k\colon X_{NP}\to\Z$ such that for all $x\in X_{NP}$ we have $\varphi(x)=\sigma^{k(x)}(x)$.  Thus if $\psi\in\langle\varphi_1,\dots,\varphi_N\rangle$ and $x\in X_{NP}$, we have 
$$
\psi\circ\varphi\circ\psi^{-1}(x)=\psi\circ\sigma^{k(\psi^{-1}(x))}\circ\psi^{-1}(x)=\sigma^{k(\psi^{-1}(x))}(x) 
$$
and if $x\in Z$, we have 
$$
\psi\circ\varphi\circ\psi^{-1}(z)=z.  
$$
Therefore $\psi\circ\varphi\circ\psi^{-1}\in H$ and so $H$ is a normal subgroup of $\Aut(X)$.  

It follows that $\Aut(X)$ is virtually $\Z^d$ for some $d<k$.
\end{proof}

\section{Minimal shifts of linear growth}
\label{sec:minimal}
For minimal shifts, we need more information on the words that are uniquely extendable: 

\begin{definition}
For $x\in X$, define 
$$
x_R:=\{y\in X\colon y(i)=x(i)\text{ for all }i\geq0\}.  
$$
For $x,y\in X$, we write $x\sim_Ry$ if $x_R=y_R$ and define $X_R:=X/\!\sim_R$ to be $X$ modulo this relation.  
\end{definition}
It is easy to check that $\sim_R$ is an equivalence relation on $X$ and 
so $X_R$ is well defined.  
We view $(X_R,\sigma)$ as a one sided shift.  If $\varphi\in\Aut(X)$, then $\varphi$ is a block code (say of range $N$) and so determines an endomorphism on $(X_R,\sigma)$ as follows: if $y\in x_R$ and $N\in\N$ is the minimal range of $\varphi$, then $\varphi(x_R):=\left(\sigma^N\circ\varphi(y)\right)_R$.  It is easy to check that $\varphi(x_R)$ is well defined.

\begin{definition}
For $x\in X$, we say that $x_R$ is {\em uniquely left extendable} if it has a unique preimage in $X_R$ and {\em nonuniquely left extendable} otherwise.

If $w\in\mathcal{L}_n(X)$ is a word of length $n$ in the language of $X$, we 
say that $w$ is {\em uniquely left extendable} if there is a unique $\hat{w}\in\mathcal{L}_{n+1}(X)$ that ends with $w$.
\end{definition}

Boshernitzan~\cite{Bos} showed that if $(X,\sigma)$ is minimal and there exists $k\in\N$ such that 
$$
\liminf_{n\to\infty}P_X(n)-kn=-\infty, 
$$
then the number of ergodic probability measures on $(X,\sigma)$ is finite.   In his proof, he makes use of a counting 
lemma and we use an infinite version of this lemma to study minimal shifts of linear growth: 

\begin{lemma}[Infinite version of Boshernitzan's Lemma]
\label{lemma-boshernitzan}
Let $(X,\sigma)$ be a shift for which there exists $k\in\N$ such that 
\begin{equation}\label{eq1}
\liminf_{n\to\infty}P_X(n)-kn=-\infty.  
\end{equation}
Then there are at most $k-1$ distinct elements of $(X_R,\sigma)$ which are nonuniquely left extendable.
\end{lemma}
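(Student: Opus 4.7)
The plan is to argue by contradiction: assume there exist $k$ distinct nonuniquely left extendable tails $x_R^{(1)},\dots,x_R^{(k)}$ in $X_R$, and deduce from this that $P_X(n+1)-P_X(n)\geq k$ for all sufficiently large $n$. Telescoping then gives $P_X(n)-kn$ bounded below, contradicting the hypothesis $\liminf_{n\to\infty}(P_X(n)-kn)=-\infty$.

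The key step is translating nonunique left extendability in the tail shift into nonunique left extendability of finite prefix words. Fix representatives $x^{(i)}\in X$ of each class $x_R^{(i)}$. Since the $k$ tails are pairwise distinct, there exists $N\in\N$ such that the length-$N$ prefixes $x^{(i)}[0,N)$ are pairwise distinct. I claim that for every $n\geq N$, each $w_i^{(n)}:=x^{(i)}[0,n)$ is a nonuniquely left extendable word of $\mathcal{L}_n(X)$. Indeed, $x_R^{(i)}$ having at least two preimages under $\sigma$ in $X_R$ unpacks (using the definition of $\sim_R$) to the existence of $y^{(i)},z^{(i)}\in X$ which agree with $x^{(i)}$ after a single left shift but satisfy $y^{(i)}(0)\neq z^{(i)}(0)$. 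Then $y^{(i)}[-1,n)$ and $z^{(i)}[-1,n)$ are two distinct elements of $\mathcal{L}_{n+1}(X)$ that both end with $w_i^{(n)}$, witnessing nonunique left extendability. Since distinct prefixes of length $N$ extend to distinct prefixes of length $n\geq N$, we obtain at least $k$ pairwise distinct nonuniquely left extendable words in $\mathcal{L}_n(X)$ for every $n\geq N$.

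Once this is in hand, I would invoke the elementary bookkeeping identity
$$
P_X(n+1)=\sum_{w\in\mathcal{L}_n(X)}\#\{a\in\A : aw\in\mathcal{L}_{n+1}(X)\}.
$$
Because $X\subseteq\A^\Z$ is two-sided, every $w\in\mathcal{L}_n(X)$ admits at least one left extension, while each nonuniquely left extendable word contributes at least $2$ to the sum. Therefore
$$
P_X(n+1)\geq P_X(n)+\#\{w\in\mathcal{L}_n(X):w\text{ is nonuniquely left extendable}\}\geq P_X(n)+k
$$
for every $n\geq N$. Summing, $P_X(n)-kn\geq P_X(N)-kN$ for all $n\geq N$, contradicting $\liminf_{n\to\infty}(P_X(n)-kn)=-\infty$, so the number of nonuniquely left extendable classes in $X_R$ is at most $k-1$.

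The main point requiring care is the translation in the middle paragraph: one must check that distinct classes in $X_R$ really do produce distinct finite prefixes (hence distinct witnesses in $\mathcal{L}_n(X)$) for all large $n$, and that a preimage pair in $X_R$ yields two honest bi-infinite sequences in $X$ whose coordinates at position $0$ differ. Both points follow directly from the definitions of $\sim_R$ and of the induced action of $\sigma$ on $X_R$, so I expect no substantial obstacle beyond keeping the indexing straight.
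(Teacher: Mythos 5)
Your proof is correct and follows essentially the same route as the paper's: both rest on the inequality $P_X(n+1)\geq P_X(n)+L_n$ (where $L_n$ is the number of nonuniquely left extendable words of length $n$) together with the observation that $k$ distinct nonuniquely left extendable classes in $X_R$ produce $k$ distinct nonuniquely left extendable words of every sufficiently large length. The paper merely arranges the contradiction in the opposite order (first deducing $L_n\leq k-1$ for infinitely many $n$, then comparing with the prefixes), but the content is identical.
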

\begin{proof}
We first claim that for infinitely many $n$, the number of words of length $n$ that are nonuniquely left extendable is at most $k-1$.  If not, let $L_n$ be the number of words of length $n$ that do not extend uniquely to their left.  Then by assumption there exists $N\in\N$ such that for all $n\geq N$ we have $L_n\geq k$.  However, 
$$
P_X(n+1)\geq P_X(n)+L_n, 
$$
and so $P_X(n)\geq P_X(N)+k\cdot(n-N)$ for all $n\geq N$.  This contradicts~\eqref{eq1}, and the claim follows.

We use this to show that there are at most $k-1$ elements in $X_R$ which are nonuniquely left extendable.  
If not, there exist distinct elements $x_1,\dots,x_k\in X_R$ which are all nonuniquely left extendable.  Choose $M\in\N$ such that for any $1\leq i<j\leq k$, there exists $0\leq m<M$ such that $x_i(m)\neq x_j(m)$.  By the first claim, there exists $n>M$ such that there are at most $k$ words of length $n$ that are nonuniquely left extendable.  For all $1\leq i\leq k$, the word 
$$
\left(x_i(0),x_i(1),\dots,x_i(n-2),x_i(n-1)\right) 
$$
is a word of length $n$ that is nonuniquely left extendable and these words are pairwise distinct since $n>M$, 
leading to a contradiction.  Thus the number of elements of $(X_R,\sigma)$ that are nonuniquely left extendable is at most $k-1$.
\end{proof}

\begin{notation}\label{NLE-def}
We write $\NLE_0\subseteq X_R$ for the collection of nonuniquely left extendable points in $X_R$.  For $m\in\N$, we write $\NLE_m:=\sigma^m(\NLE_0)$ for the collection of elements of $X_R$ whose preimage under $m$ iterates of $\sigma$  contains more than one point.
\end{notation}

\begin{lemma}\label{lemma-extension}
If $y\in X_R\setminus\bigcup_{m=0}^{\infty}\NLE_m$, then there is a unique $z\in X$ for which $y=z_R$.
\end{lemma}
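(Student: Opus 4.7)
The plan is to note that existence of $z$ is immediate: since $X_R=X/\!\sim_R$, any $y\in X_R$ is by definition the equivalence class of some $z\in X$. The real content is uniqueness, which I intend to reduce to the observation that under the hypothesis, $y$ has a uniquely determined chain of left-preimages in $X_R$.

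Given any $z\in X$ with $z_R=y$, I would form the sequence $z^{(-m)}:=(\sigma^{-m}z)_R\in X_R$ for $m\geq 0$. A quick check (using that $\sim_R$ is preserved by $\sigma$) shows that $z^{(0)}=y$ and $\sigma(z^{(-m)})=z^{(-m+1)}$, so $(z^{(-m)})_{m\geq 0}$ is a chain of successive preimages of $y$ under the one-sided shift on $X_R$. The key claim is that such a chain is uniquely determined by $y$: by induction on $m$, each $z^{(-m)}$ lies outside $\NLE_0$, for if $z^{(-m)}\in\NLE_0$ then $y=\sigma^m(z^{(-m)})\in\sigma^m(\NLE_0)=\NLE_m$, contradicting $y\notin\bigcup_{m\geq 0}\NLE_m$. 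Hence $z^{(-m)}$ has a unique preimage in $X_R$, which is forced to be $z^{(-m-1)}$.

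If $z'\in X$ is a second element with $z'_R=y$, the same construction produces a chain $(z'^{(-m)})$ with $z'^{(0)}=y$ satisfying the identical recursion, and by the uniqueness just established, $z^{(-m)}=z'^{(-m)}$ for every $m\geq 0$. Unpacking $\sim_R$, this says that for each $m$ the points $\sigma^{-m}z$ and $\sigma^{-m}z'$ agree on all non-negative coordinates, i.e., $z(k)=z'(k)$ for all $k\geq -m$. Letting $m\to\infty$ gives $z=z'$.

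The only obstacle is bookkeeping: one must verify carefully that the one-sided shift $\sigma$ on $X_R$ is well defined on $\sim_R$-classes and that $(\sigma^{-m}z)_R$ really behaves as an $m$-fold preimage of $y$ inside $X_R$. Both are routine consequences of the definition of $\sim_R$, so no deeper input is needed beyond the hypothesis on $y$ and the induction above.
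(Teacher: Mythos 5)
Your argument is correct and is essentially the paper's proof in a slightly different packaging: where you run a forward induction showing that each term $(\sigma^{-m}z)_R$ of the preimage chain lies outside $\NLE_0$ (else $y\in\NLE_m$) and hence is uniquely determined, the paper argues by contradiction from the minimal negative coordinate at which two preimages $z_1,z_2$ disagree, producing the same branch point in some $\NLE_m$. The key mechanism --- that $y\notin\bigcup_{m\geq 0}\NLE_m$ forces unique left extendability at every stage of the past --- is identical, so no further comparison is needed.
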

\begin{proof}
If not, there exist distinct 
$z_1, z_2\in X$ and $y=(z_1)_R=(z_2)_R$.  Thus there exists $i\in\N$ such that $z_1(-i)\neq z_2(-i)$.  Set $i_0$ 
to be the minimal such $i$.  Then $\sigma^{-i_0+1}y=(\sigma^{-i_0+1}z_1)_R=(\sigma^{-i_0+1}z_2)_R$, \ but $(\sigma^{-i_0}z_1)_R\neq(\sigma^{-i_0}z_2)_R$.  Thus $\sigma^{-i_0+1}y\in \NLE_0$, which implies that $y\in \NLE_{-i_0+1}$, a contradiction.
\end{proof}

\begin{lemma}\label{lemma-finite-extension}
If $(X,\sigma)$ is a shift, $\varphi\in\Aut(X)$, and $y\in \NLE_0$, then there exists $m\geq 0$ such that $\varphi(y)\in \NLE_m$.
\end{lemma}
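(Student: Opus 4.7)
The plan is to exhibit a preimage of $\varphi(y)$ under a suitable nonnegative power of $\sigma$ that itself lies in $\NLE_0$. The construction uses the sliding-block-code structure of $\varphi$ to transport two preimages of $y$ forward through $\varphi$.

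First I would unpack the hypothesis: $y\in\NLE_0$ means that under the induced map $\sigma\colon X_R\to X_R$, the class $y$ has at least two distinct preimages $y',y''\in X_R$. Choose lifts $\tilde y',\tilde y''\in X$ with $(\tilde y')_R=y'$ and $(\tilde y'')_R=y''$. The identity $\sigma y'=\sigma y''=y$ in $X_R$ forces $\tilde y'(i)=\tilde y''(i)$ for all $i\ge 1$, while $y'\ne y''$ forces $\tilde y'(0)\ne\tilde y''(0)$. In particular $\tilde y'\ne\tilde y''$ in $X$.

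Next, let $N$ be the minimal range of $\varphi$ and set $z':=\varphi(\tilde y')$ and $z'':=\varphi(\tilde y'')$. Because $\varphi$ has range $N$ and the inputs agree on $[1,\infty)$, the outputs satisfy $z'(n)=z''(n)$ for all $n\ge N+1$. Because $\varphi$ is a bijection on $X$ (this is the only place where $\varphi\in\Aut(X)$, as opposed to merely an endomorphism, is used), we have $z'\ne z''$. Thus the set of positions at which $z'$ and $z''$ disagree is nonempty and bounded above by $N$; let $i_0\le N$ be its maximum. By construction, $\sigma^{i_0}z'$ and $\sigma^{i_0}z''$ differ at position $0$ and agree at every position $\ge 1$, so their classes in $X_R$ are distinct but have the same image under $\sigma$. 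Setting $z^*:=(\sigma^{i_0+1}z')_R=(\sigma^{i_0+1}z'')_R\in X_R$, this shows $z^*\in\NLE_0$.

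Finally I would verify that $\varphi(y)$ is a nonnegative shift of $z^*$. The induced action $\varphi(x_R)=(\sigma^N\varphi(x))_R$ together with the relation $\sigma\varphi=\varphi\sigma$ on $X_R$ gives
$$
\varphi(y)=\varphi(\sigma y')=\sigma\,\varphi(y')=(\sigma^{N+1}z')_R,
$$
while $\sigma^{N-i_0}z^*=\sigma^{N-i_0}(\sigma^{i_0+1}z')_R=(\sigma^{N+1}z')_R$ as well. Setting $m:=N-i_0\ge 0$ yields $\varphi(y)=\sigma^m z^*\in\sigma^m(\NLE_0)=\NLE_m$. The only genuine obstacle is the bookkeeping of indices around the range of $\varphi$; the essential conceptual input is that bijectivity of $\varphi$ on the two-sided shift forces $z'\ne z''$, which in turn supplies the required index $i_0$.
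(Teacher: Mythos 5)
Your proof is correct, and it takes a genuinely different route from the paper's. The paper argues by contradiction: if $\varphi(y)$ avoided every $\NLE_m$, then by Lemma~\ref{lemma-extension} it would have a unique lift $z\in X$, and pulling $z$ back through $\varphi^{-1}$ would make $y$ uniquely liftable, contradicting $y\in\NLE_0$. You instead argue directly and constructively: you take the two distinct left-extensions of $y$, push them forward through the block code, use injectivity of $\varphi$ to see their images $z'\ne z''$ still disagree somewhere (necessarily at a coordinate $i_0\le N$), and read off an explicit witness $z^*=(\sigma^{i_0+1}z')_R\in\NLE_0$ with $\varphi(y)=\sigma^{N-i_0}z^*$. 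The index bookkeeping is all right: the disagreement set of $z',z''$ is nonempty and bounded above by $N$, so the maximum $i_0$ exists and $m=N-i_0\ge 0$; and your use of $\varphi(x_R)=(\sigma^N\varphi(x))_R$ and its commutation with $\sigma$ on $X_R$ matches the paper's conventions. What each approach buys: the paper's version is shorter once Lemma~\ref{lemma-extension} is in hand, while yours is self-contained (it bypasses Lemma~\ref{lemma-extension} entirely), identifies exactly where bijectivity enters (only injectivity is needed, to preserve distinctness of the two extensions), and exhibits the merging point $z^*$ explicitly rather than merely asserting its existence. Either proof serves the subsequent application in Theorem~\ref{theorem:minimal}.
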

\begin{proof}
It not, then $\varphi(y)\in X_R\setminus\bigcup_{m=0}^{\infty}\NLE_m$ and so Lemma~\ref{lemma-extension} 
implies that there is a unique $z\in X$ such that $\varphi(y)=z_R$.  Since $\varphi$ is an automorphism, it follows that $\varphi^{-1}(z)$ is the only solution to the equation $y=x_R$, a contradiction of $y\in \NLE_0$.
\end{proof}

We use this to complete the characterization of the automorphism group for minimal aperiodic shifts 
with linear growth: 

\begin{proof}[Proof of Theorem~\ref{theorem:minimal}]
Assume $(X,\sigma)$ is an aperiodic minimal shift such that there exists $k\in\N$ with 
$\liminf_{n\to\infty}P_X(n)/n<k$.  

Fix $y\in \NLE_0$ and let $\varphi\in\Aut(X)$.  By Lemma~\ref{lemma-finite-extension}, there exists $m\in\N$ such that $\varphi(y)\in \NLE_m$.  Let $m_{\varphi}\geq 0$ be the smallest non-negative integer for which $\varphi(y)\in \NLE_m$.  Then there exists $z_{\varphi}\in \NLE_0$ such that $\sigma^{m_{\varphi}}(z_{\varphi})=\varphi(y)$.

Now suppose $\varphi_1, \varphi_2\in\Aut(X)$ and $z_{\varphi_1}=z_{\varphi_2}$.  We claim that $\varphi_1$ and $\varphi_2$ project to the same element in $\Aut(X)/\langle\sigma\rangle$.  Without loss, suppose $m_{\varphi_1}\leq m_{\varphi_2}$.  Then 
$$
\varphi_2(y)=\sigma^{m_{\varphi_2}}(z_{\varphi_2})=\sigma^{(m_{\varphi_2}-m_{\varphi_1})}\circ\sigma^{m_{\varphi_1}}(z_{\varphi_1})=\sigma^{(m_{\varphi_2}-m_{\varphi_1})}\circ\varphi_1(y).
$$
By minimality, every word of every length occurs syndetically in every element of $(X,\sigma)$.  It follows that
all words occur syndetically in every element of $(X_R,\sigma)$, and in particular, all words occur syndetically in $y$.  
Both $\varphi_2$ and $\sigma^{(m_{\varphi_2}-m_{\varphi_1})}\circ\varphi_1$ are sliding block codes.  Since $\varphi_2(y)=\sigma^{(m_{\varphi_2}-m_{\varphi_1})}\circ\varphi_1(y)$, it follows that $\varphi_2$ and $\sigma^{(m_{\varphi_2}-m_{\varphi_1})}\circ\varphi_1$ have the same image on every word, meaning that they define the same block code.  In other words, $\varphi_1$ and $\varphi_2$ project to the same element in $\Aut(X)/\langle\sigma\rangle$, proving the claim.

Since $|\NLE_0|\leq k-1$, Lemma~\ref{lemma-boshernitzan} implies 
that there can be at most $k-1$ distinct elements of $(X_R,\sigma)$ that arise as $z_{\varphi}$ for $\varphi\in\Aut(X)$.  Therefore, there are at most $k-1$ distinct elements of $\Aut(X)/\langle\sigma\rangle$. $\hfill\square$
\end{proof}

This can be used to characterize the automorphism groups for particular systems.  
We note the simplest case of a Sturmian shift  for later use (see~\cite[Example 4.1]{Olli}):
\begin{corollary}
\label{cor:olli}
If $(X,\sigma)$ is a Sturmian shift, then $\Aut(X)=\langle\sigma\rangle$.
\end{corollary}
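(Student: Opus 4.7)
The plan is to apply Theorem~\ref{theorem:minimal} directly, with the smallest nontrivial choice of $k$. Recall that a Sturmian shift $(X,\sigma)$ is by definition a minimal aperiodic subshift whose factor complexity function satisfies $P_X(n)=n+1$ for every $n\in\N$; in particular it is minimal, so the hypotheses of Theorem~\ref{theorem:minimal} are available.

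First I would note that
$$
\liminf_{n\to\infty}\frac{P_X(n)}{n}=\liminf_{n\to\infty}\frac{n+1}{n}=1<2,
$$
so the hypothesis of Theorem~\ref{theorem:minimal} holds with $k=2$. Applying the conclusion of that theorem yields
$$
\bigl|\Aut(X)/\langle\sigma\rangle\bigr|<2,
$$
which forces $|\Aut(X)/\langle\sigma\rangle|=1$, i.e.\ $\Aut(X)=\langle\sigma\rangle$.

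There is essentially no obstacle: the result is a one-line specialization of Theorem~\ref{theorem:minimal} once one invokes the standard fact that Sturmian shifts are minimal with complexity $n+1$. The only thing to verify is that the strict inequality $\liminf P_X(n)/n<k$ can indeed be achieved with $k=2$, which is immediate since $(n+1)/n\to 1$.
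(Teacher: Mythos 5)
Your proof is correct and is essentially identical to the paper's: both invoke the standard facts that a Sturmian shift is minimal, aperiodic, and has $P_X(n)=n+1$, then apply Theorem~\ref{theorem:minimal} with $k=2$ to conclude $|\Aut(X)/\langle\sigma\rangle|<2$. Your explicit verification that $\liminf_{n\to\infty}P_X(n)/n=1<2$ is a small but welcome addition the paper leaves implicit.
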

\begin{proof}
For a Sturmian shift, $(X,\sigma)$ is minimal, aperiodic, and $P_X(n)=n+1$ for all $n\in\N$.  Applying Theorem~\ref{theorem:minimal} with $k=2$, we have that $\left|\Aut(X)/\langle\sigma\rangle\right|=1$.
\end{proof}

More generally: 
\begin{corollary}
If $(X,\sigma)$ is aperiodic, minimal and there exists $k\in\N$ such that 
$$
\liminf_{n\to\infty}P_X(n)-kn=-\infty, 
$$
then $\Aut(X)$ is the semi-direct product of a finite group and $\Z$.
\end{corollary}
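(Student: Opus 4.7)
The plan is to combine Theorem~\ref{theorem:minimal} with classical facts about central-by-finite groups, with the key extra ingredient being that $\langle\sigma\rangle$ is central in $\Aut(X)$. First, I would note that the proof of Theorem~\ref{theorem:minimal} uses its hypothesis only through Lemma~\ref{lemma-boshernitzan}, whose hypothesis is precisely $\liminf_{n\to\infty}P_X(n)-kn=-\infty$; so under the assumptions of the corollary, that proof still yields that $\Aut(X)/\langle\sigma\rangle$ is finite. Write $A=\Aut(X)$. Since $X$ is aperiodic, $\sigma$ has infinite order and $\langle\sigma\rangle\cong\Z$; since $A$ is by definition the centralizer of $\sigma$ in $\Hom(X)$, the subgroup $\langle\sigma\rangle$ is central in $A$. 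Hence $A$ is central-by-finite, fitting into a central extension
\begin{equation*}
1\to\Z\to A\to G\to 1
\end{equation*}
with $G$ finite.

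Second, I would invoke Schur's theorem on central-by-finite groups, which guarantees that $[A,A]$ is finite. Passing to the abelianization $A^{ab}=A/[A,A]$, this is a finitely generated abelian group in which the image of $\langle\sigma\rangle$ is still isomorphic to $\Z$: the intersection $\langle\sigma\rangle\cap[A,A]$ is at once torsion-free (sitting in $\Z$) and torsion (sitting in the finite group $[A,A]$), hence trivial. Moreover this copy of $\Z$ has finite index in $A^{ab}$, so the structure theorem for finitely generated abelian groups yields $A^{ab}\cong\Z\oplus T$ for some finite abelian group $T$.

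Third, let $\pi\colon A\to A^{ab}$ be the abelianization map, let $q\colon A^{ab}\to A^{ab}/T\cong\Z$ be the projection, and set $F:=\ker(q\circ\pi)$. Then $F$ is a normal subgroup of $A$, and $A/F\cong\Z$. Further, $F$ is finite, because it fits into $1\to[A,A]\to F\to T\to 1$ with both ends finite. Since $\Z$ is free, the short exact sequence
\begin{equation*}
1\to F\to A\to\Z\to 1
\end{equation*}
splits (any lift of a generator of $\Z$ gives a section), and therefore $\Aut(X)\cong F\rtimes\Z$. There is no serious obstacle once Theorem~\ref{theorem:minimal} is in hand; the only step that requires a moment of thought is using the centrality of $\langle\sigma\rangle$ to ensure we are in the setting of Schur's theorem, which in turn rules out the other possible structure for a virtually cyclic group (namely an extension involving the infinite dihedral group) and forces the semidirect-product form.
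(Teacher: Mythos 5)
Your proof is correct, and the group-theoretic half takes a genuinely different route from the paper. The paper's proof is two lines: it applies Theorem~\ref{theorem:minimal} to get $\Aut(X)/\langle\sigma\rangle$ finite, and then cites the classification of virtually cyclic groups from~\cite{SW}, using centrality of the infinite-order element $\sigma$ to exclude the infinite-dihedral type and conclude that the (automatically split, since $\Z$ is free) extension $1\to F\to\Aut(X)\to\Z\to1$ gives a semidirect product. You instead make the argument self-contained: from central-by-finite you invoke Schur's theorem to get $[A,A]$ finite, observe $\langle\sigma\rangle\cap[A,A]=\{1\}$ so that $A^{\mathrm{ab}}\cong\Z\oplus T$, and then pull back the torsion to produce the finite normal subgroup $F$ with $A/F\cong\Z$. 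What your approach buys is that it replaces the black-box citation of the Scott--Wall classification with the much lighter Schur's theorem, and it makes explicit exactly where the dihedral possibility dies (in the fact that $\sigma$ survives into the abelianization with infinite order); what the paper's approach buys is brevity. Your preliminary remark that the complexity hypothesis of the corollary (which is formally weaker than the hypothesis of Theorem~\ref{theorem:minimal} as stated) still suffices, because the theorem's proof consumes the hypothesis only through Lemma~\ref{lemma-boshernitzan}, is a point the paper glosses over entirely, and it is worth having on record; alternatively one can just note that $\liminf_n\bigl(P_X(n)-kn\bigr)=-\infty$ forces $\liminf_n P_X(n)/n<k+1$ and apply the theorem with $k+1$.
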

\begin{proof}
By Theorem~\ref{theorem:minimal}, $\Aut(X)/\langle\sigma\rangle$ is finite.  Since $\langle\sigma\rangle$ 
has infinite order and is contained in the center of $\Aut(X)$, it follows from the classification of virtually cyclic groups (see~\cite{SW}) that $\Aut(X)$ is the semi-direct product of 
a finite group and $\Z$. 
\end{proof}

\section{Examples}
\label{sec:examples}

\subsection{Automorphism group with large polynomial growth}
\label{sec:large-poly-growth}
Proposition~\ref{prop:polynomial} shows that if $(X,\sigma)$ is a shift satisfying 
$$
\limsup_{n\to\infty}\frac{P_X(n)}{n}<k 
$$
then $\Aut(X)$ is locally a group of polynomial growth, with polynomial growth rate at most $k-1$.  The following Proposition shows that this estimate of the polynomial growth rate of $\Aut(X)$ is optimal.
\begin{proposition}
Let $k\in\N$ be fixed and let $\A=\{0,1\}\times\{1,\dots,k\}$.  There is a shift $X\subseteq\A^{\Z}$ with a dense set of aperiodic points such that $P_X(n)=kn+k$ and $\Aut(X)\cong\Z^k$.
\end{proposition}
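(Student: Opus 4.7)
The plan is to take $X$ to be a disjoint union of $k$ pairwise topologically non-conjugate Sturmian shifts, each tagged with a distinct label in the second coordinate. Concretely, choose irrationals $\alpha_1,\dots,\alpha_k\in(0,1)$ satisfying $\alpha_i\notin\{\alpha_j,1-\alpha_j\}$ for $i\neq j$ (for example, $k$ elements of $(0,\tfrac12)$ that are linearly independent over $\Q$), let $Y_i\subseteq\{0,1\}^{\Z}$ denote the Sturmian shift of slope $\alpha_i$, and set
$$
X:=\bigsqcup_{i=1}^{k}Y_i\times\{i\}\subseteq\A^{\Z},
$$
where $(y,i)\in Y_i\times\{i\}$ is identified with the sequence $n\mapsto(y(n),i)$. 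Each $Y_i\times\{i\}$ is a $\sigma$-invariant clopen subset of $X$ that is topologically conjugate to $(Y_i,\sigma)$, and so is minimal and purely aperiodic; in particular the aperiodic points of $X$ are dense. Because words of length $n$ appearing in $X$ have a constant second coordinate, the language of $X$ partitions over $i$, giving $P_X(n)=\sum_{i=1}^{k}P_{Y_i}(n)=k(n+1)=kn+k$.

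I would next exhibit $k$ commuting automorphisms generating a copy of $\Z^k$. For each $i\in\{1,\dots,k\}$, let $\sigma_i\colon X\to X$ be defined by $\sigma_i(x)(n)=x(n+1)$ when the second coordinate of $x(n)$ is $i$, and $\sigma_i(x)(n)=x(n)$ otherwise. Then $\sigma_i$ is a sliding block code of range $1$ that restricts to $\sigma$ on $Y_i\times\{i\}$ and to the identity on every other component; in particular it commutes with $\sigma$ and is invertible. The $\sigma_i$'s pairwise commute since their nontrivial actions live on disjoint clopen subshifts, and a product $\sigma_1^{m_1}\cdots\sigma_k^{m_k}$ is the identity only when every $m_i=0$ (by restricting to $Y_i\times\{i\}$ and using that $\sigma$ has infinite order on the aperiodic shift $Y_i$). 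Hence $\langle\sigma_1,\dots,\sigma_k\rangle\cong\Z^k\subseteq\Aut(X)$.

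For the reverse inclusion, any $\varphi\in\Aut(X)$ must permute the minimal components of $X$, and these components are exactly the $Y_i\times\{i\}$ (any minimal subset is closed and $\sigma$-invariant, so has constant second coordinate by continuity and therefore lies in a single clopen component, which is itself already minimal). The key input is the classical fact that Sturmian shifts $Y_\alpha$ and $Y_\beta$ are topologically conjugate if and only if $\beta\in\{\alpha,1-\alpha\}$ (this follows from the slope being an invariant of the unique Kronecker factor $R_\alpha$); our choice of slopes therefore forces the permutation induced by $\varphi$ on $\{1,\dots,k\}$ to be the identity, so $\varphi$ preserves each component setwise. Applying Corollary~\ref{cor:olli} to each Sturmian piece, the restriction of $\varphi$ to $Y_i\times\{i\}$ equals some power $\sigma^{m_i}$, and hence $\varphi=\sigma_1^{m_1}\cdots\sigma_k^{m_k}\in\langle\sigma_1,\dots,\sigma_k\rangle$. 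The main obstacle is the Sturmian non-conjugacy input; once it is invoked, the rest of the argument reduces to the Sturmian case already handled by Corollary~\ref{cor:olli}.
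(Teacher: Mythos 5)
Your proof is correct and follows essentially the same route as the paper: a disjoint union of $k$ pairwise non-conjugate Sturmian shifts on tagged alphabets, with every automorphism forced to preserve each minimal component and Corollary~\ref{cor:olli} applied componentwise. The only difference is how pairwise non-conjugacy is arranged --- the paper uses a cardinality argument (uncountably many Sturmian shifts but only countably many sliding block codes), whereas you invoke the rotation-number invariant of the maximal equicontinuous factor; both suffice, since an automorphism carrying one component onto another would restrict to a topological conjugacy.
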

\begin{proof}
Recall that a Sturmian shift is an aperiodic, minimal shift of $\{0,1\}^{\Z}$ whose complexity function satisfies $P_X(n)=n+1$ for all $n$.  There are uncountably many Sturmian shifts and any particular Sturmian shift only factors onto countably many other Sturmian shifts (since the factor map must be a sliding block code, of which there are only countably many).  Therefore there exist $k$ Sturmian shifts $X_1, X_2,\dots,X_k$ such that there exists a sliding block code taking $X_i$ to $X_j$ if and only if $i=j$.  We identify $X_i$ with in a natural way with a shift of $\A^{\Z}$ by writing the elements of $X_i$ with the letters $(0,i)$ and $(1,i)$ and will abuse notation by also referring to this shift as $X_i$.  Let $X:=X_1\cup\cdots\cup X_k$ (which is clearly shift invariant, and is closed because the minimum distance between a point in $X_i$ and $X_j$ is $1$ whenever $i\neq j$).  

Let $\varphi\in\Aut(X)$.  As $\varphi$ is given by a sliding block code, $\varphi$ must preserve the sets $X_1,\dots,X_k$.  Therefore $\Aut(X)\cong\Aut(X_1)\times\cdots\times\Aut(X_k)$.  By Corollary~\ref{cor:olli} we have $\Aut(X_i)=\langle\sigma\rangle\cong\Z$ for $i=1,\dots,k$.  So $\Aut(X)\cong\Z^k$.
\end{proof}

\subsection{Quickly growing transitive shifts with trivial automorphism group}
\label{sec:no-complexity-threshold}
Next we describe a general process which takes a minimal shift of arbitrary growth rate and produces a transitive shift with essentially the same growth, but whose automorphism group consists only of powers of the shift.  This shows that there is no ``complexity threshold'' above which the automorphism group of a transitive shift must be nontrivial.

\begin{lemma}\label{lemma:dense-orbit}
If $(X,\sigma)$ is a transitive shift with precisely one dense orbit, then $\Aut(X)=\langle\sigma\rangle$.
\end{lemma}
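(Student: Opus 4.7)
The plan is to exploit the uniqueness of the dense orbit together with the fact that an automorphism is determined by its value at any one point whose orbit is dense. Fix $x_0\in X$ so that $\overline{\mathcal{O}}(x_0)=X$, and by hypothesis $x_0$ is the only point (up to shifts) whose orbit is dense.

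First I would show that automorphisms preserve the property of having a dense orbit. Let $\varphi\in\Aut(X)$. Since $\varphi$ is a homeomorphism of $X$ commuting with $\sigma$, we have
$$
\overline{\mathcal{O}}(\varphi(x_0)) \;=\; \overline{\{\sigma^n\varphi(x_0):n\in\Z\}} \;=\; \overline{\varphi(\mathcal{O}(x_0))} \;=\; \varphi(\overline{\mathcal{O}}(x_0)) \;=\; \varphi(X) \;=\; X.
$$
So $\varphi(x_0)$ has dense orbit.

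Next I would use the uniqueness assumption: since $x_0$ generates the only dense orbit in $X$, we must have $\mathcal{O}(\varphi(x_0))=\mathcal{O}(x_0)$, hence there exists $n\in\Z$ with $\varphi(x_0)=\sigma^n(x_0)$. Consequently $\varphi$ and $\sigma^n$ agree at $x_0$, and since they both commute with $\sigma$, they agree on the entire orbit $\mathcal{O}(x_0)$. As $\mathcal{O}(x_0)$ is dense in $X$ and both maps are continuous, $\varphi=\sigma^n$ on all of $X$. Therefore $\Aut(X)\subseteq\langle\sigma\rangle$, and the reverse inclusion is trivial.

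There is really no obstacle here: the only point worth being careful about is that "precisely one dense orbit" is interpreted as a single orbit (not a single point), so that the conclusion $\varphi(x_0)=\sigma^n(x_0)$ really does follow from $\mathcal{O}(\varphi(x_0))=\mathcal{O}(x_0)$. Everything else is a standard density/continuity argument.
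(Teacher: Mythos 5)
Your proof is correct and follows essentially the same route as the paper: $\varphi$ carries the unique dense orbit to itself, so $\varphi(x_0)=\sigma^n(x_0)$, and then $\varphi=\sigma^n$ by density and continuity. The only difference is that you spell out the (easy) verification that automorphisms preserve dense orbits, which the paper takes for granted.
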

\begin{proof}
Suppose that there exists $x_0\in X$ such that 
$$
\{y\in X\colon y\text{ has a dense orbit}\}=\mathcal{O}(x_0).  
$$
If $\varphi\in\Aut(X)$, then $\varphi(x_0)$ has a dense orbit and so there exists $k\geq 0$ such that $\varphi(x_0)=\sigma^k(x_0)$.  It follows that $\varphi$ and $\sigma^k$ agree on the (dense) orbit of $x_0$.  Since both functions are continuous, they agree everywhere.
\end{proof}

\begin{example}

Let $\mathcal{A}=\{0,1,2,\dots,n-1\}$ and let $X\subseteq\mathcal{A}^{\mathbb{Z}}$ be a minimal shift.  Let 
$\tilde{\mathcal{A}}=\mathcal{A}\cup\{n\}$, where we add the symbol $n$ to the alphabet and $n\notin\A$.  Fix $x_0\in X$ and define $\tilde{x}_0\in\tilde{\mathcal{A}}^{\mathbb{Z}}$ by: 
$$
\tilde{x}_0(i)=\begin{cases}
x_0(i) & \text{ if }i\neq 0; \\
n & \text{ if }i=0.
\end{cases}
$$
Let $\tilde{X}\subseteq\tilde{A}^{\mathbb{Z}}$ be the orbit closure of $\tilde{x}_0$.  Then $\tilde{X}=X\cup\mathcal{O}(\tilde{x}_0)$, $(\tilde{X},\sigma)$ is transitive, $p_{\tilde{X}}(n)=p_X(n)+n$ for all $n\in\N$, and $\tilde{X}$ has precisely one dense orbit.  By  Lemma~\ref{lemma:dense-orbit},  $\Aut(\tilde{X})=\langle\sigma\rangle$.  
\end{example}

\subsection{$\Aut(X)$ and $\Aut(X)/\Aut(X)\cap[\sigma]$ are not always finitely generated}
\label{sec:not-global}

Theorem~\ref{thm:main} shows that every finitely generated subgroup of $\Aut(X)$ is virtually $\Z^d$.  When $X$ has a dense set of aperiodic points, Theorem~\ref{th:finitely-generated} shows that $\Aut(X)/\Aut(X)\cap[\sigma]$ is finite.  In this section we show that the result of Theorem~\ref{th:finitely-generated} cannot be extended to the general case, and the words ``every finitely generated subgroup'' cannot be removed from the statement of Theorem~\ref{thm:main}.  We begin with an example to set up our construction.

\begin{example}
Let $\mathcal{A}=\{0,1\}$ and for $n\in\N$, let $x_n\in\A^{\mathbb{Z}}$ be the periodic point 
	$$
	x_n(i)=\begin{cases}
	1 & \text{ if }i\equiv0\text{ (mod $2^n$)}; \\
	0 & \text{ otherwise}.
	\end{cases}
	$$
Let $X$ be the closure of the set $\{x_n\colon n\in\N\}$ under $\sigma$.  If we define 
	$$
	x_{\infty}(i)=\begin{cases}
	1 & \text{ if }i=0; \\
	0 & \text{ otherwise}, 
	\end{cases}
	$$
and ${\bf 0}$ to be the $\A$-coloring of all zeros, then we have 
$$
X=\{{\bf 0}\}\cup\mathcal{O}(x_{\infty})\cup\bigcup_{n=1}^{\infty}\mathcal{O}(x_n).  
$$
Suppose $R\in\N$ is fixed and $\varphi\in\Aut_R(X)$.  Since $\varphi$ preserves the period of periodic points, $\varphi({\bf 0})={\bf 0}$.  In particular, the block code $\varphi$ takes the block consisting of all zeros to $0$.  It follows that there exists $k\in[-R,R]$ such that $\varphi(x_{\infty})=\sigma^k(x_{\infty})$.  For any $m>2R+1$, the blocks of length $2R+1$ occurring in $x_m$ are identical to those appearing in $x_{\infty}$ and so $\varphi(x_m)=\sigma^k(x_m)$ for all such $m$.

Now let $\varphi_1,\dots,\varphi_n\in\Aut(X)$ and find $R\in\N$ such that $\varphi_1,\dots,\varphi_n,\varphi_1^{-1},\dots,\varphi_n^{-1}\in\Aut_R(X)$.  For $1\leq i\leq n$, let $k_i\in[-R,R]$ be such that for all $m>2R+1$ we have $\varphi_i(x_m)=\sigma^{k_i}(x_m)$.  Then $N\in\N$, any $e_1,\dots,e_N\in\{1,\dots,n\}$, any $\epsilon_1,\dots,\epsilon_N\in\{-1,1\}$, and any $m>2R+1$, we have 
$$
\left(\varphi_{e_1}^{\epsilon_1}\circ\varphi_{e_2}^{\epsilon_2}\circ\cdots\circ\varphi_{e_N}^{\epsilon_N}\right)(x_m)=\sigma^{(\epsilon_1\cdot k_{e_1}+\epsilon_2\cdot k_{e_2}+\cdots+\epsilon_N\cdot k_{e_N})}(x_m).  
$$
Then if $\varphi\in\Aut(X)$ is the automorphism that acts like $\sigma$ on $\mathcal{O}(x_{R+1})$ and acts trivially on $X\setminus\mathcal{O}(x_{R+1})$ (this map is continuous because $x_{R+1}$ is isolated), then $\varphi\notin\langle\varphi_1,\dots,\varphi_N\rangle$.  Therefore $\langle\varphi_1,\dots,\varphi_N\rangle\neq\Aut(X)$.  Since $\varphi_1,\dots,\varphi_n\in\Aut(X)$ were general, it follows that $\Aut(X)$ is not finitely generated.

On the other hand 
$$
P_X(n)=n+2^{\lfloor\log_2(n)\rfloor+1}-1<3n 
$$
for all $n$, so $P_X(n)$ grows linearly.  We also remark that $\Aut(X)=\Aut(X)\cap[\sigma]$ for this shift.
\end{example}

\begin{proposition}\label{ex:infinitely-generated}
There exists a shift $(X,\sigma)$ of linear growth that has a dense set of periodic points and is such that none of the groups $\Aut(X)$, $\Aut(X)\cap[\sigma]$, and $\Aut(X)/\Aut(X)\cap[\sigma]$ are finitely generated.
\end{proposition}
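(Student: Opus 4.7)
The plan is to enrich the example preceding this proposition by doubling the alphabet so that for each level $n$ there are two distinct isolated periodic orbits of period $2^n$ which can be swapped by an automorphism. Take $\mathcal{A}=\{0,1,2\}$ and, for each $n\in\N$ and $j\in\{1,2\}$, define the periodic point $y_n^{(j)}\in\A^{\Z}$ by $y_n^{(j)}(i)=j$ whenever $i\equiv 0\pmod{2^n}$ and $y_n^{(j)}(i)=0$ otherwise; let $X$ be the $\sigma$-closure of $\bigcup_{n,j}\mathcal{O}(y_n^{(j)})$. The same computation as in the previous example identifies
$$
X=\{\mathbf{0}\}\cup\mathcal{O}(y_\infty^{(1)})\cup\mathcal{O}(y_\infty^{(2)})\cup\bigcup_{n\in\N,\,j\in\{1,2\}}\mathcal{O}(y_n^{(j)}),
$$
where $y_\infty^{(j)}(0)=j$ and $y_\infty^{(j)}(i)=0$ for $i\neq 0$. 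Because no point of $X$ uses both $1$ and $2$, a straightforward count (separating words with at most one non-zero letter from those coming from a single $\mathcal{O}(y_k^{(j)})$ with $2^k\leq n$) gives $P_X(n)\leq 6n+O(1)$, so $P_X$ grows linearly; and the periodic points are dense since $y_\infty^{(j)}=\lim_{n\to\infty}y_n^{(j)}$ is the only source of aperiodic points.

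Next I would exhibit two families of automorphisms. Each $\mathcal{O}(y_n^{(j)})$ is finite and isolated in $X$, so the following rules define continuous maps commuting with $\sigma$: $\psi_n\in\Aut(X)$ acts as $\sigma$ on $\mathcal{O}(y_n^{(1)})$ and as the identity elsewhere, and $\tau_n\in\Aut(X)$ sends $\sigma^k(y_n^{(1)})\leftrightarrow\sigma^k(y_n^{(2)})$ for all $k$ and is the identity elsewhere. By construction $\psi_n\in\Aut(X)\cap[\sigma]$, while $\tau_n\notin[\sigma]$ (since $y_n^{(2)}\notin\mathcal{O}(y_n^{(1)})$), so the class $[\tau_n]$ is nontrivial in $\Aut(X)/\Aut(X)\cap[\sigma]$. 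The $\psi_n$'s will witness failure of finite generation for $\Aut(X)\cap[\sigma]$, the $\tau_n$'s for $\Aut(X)$, and the $[\tau_n]$'s for the quotient.

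The core step is to show that no finitely generated subgroup of $\Aut(X)$ can contain $\psi_n$ or $\tau_n$ for $n$ sufficiently large. Given $\varphi_1,\ldots,\varphi_N\in\Aut(X)$ with common range bound $R$, I would prove by induction on the length of a product in $\varphi_1^{\pm 1},\ldots,\varphi_N^{\pm 1}$ that every $\varphi\in\langle\varphi_1,\ldots,\varphi_N\rangle$ acts on $y_m^{(j)}$, for all $m$ with $2^m>2R+1$, as some fixed shift composed with a fixed permutation of the color set $\{1,2\}$, both depending only on the generator word and on $j$ (not on $m$). The key observation is that every length-$(2R+1)$ window occurring in such a $y_m^{(j)}$ also occurs in $y_\infty^{(j)}$, so the range-$R$ block code of each generator must act identically on them; moreover the image of a shift of $y_m^{(j)}$ is again a shift of some $y_m^{(c)}$ with $c\in\{1,2\}$, so the bound $2^m>2R+1$ continues to apply as the next generator is composed. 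It follows that the action of $\varphi$ on $y_n^{(j)}$ agrees with its action on $y_{n+1}^{(j)}$ whenever $2^n>2R+1$, which rules out $\psi_n$ (which shifts only $y_n^{(1)}$), $\tau_n$ (which swaps colors only at level $n$), and, comparing modulo shifts, $[\tau_n]$ in the quotient. The main technical obstacle is precisely this uniformity in $m$: although the range of a product of generators may grow with word length, the level threshold needed for the shift-plus-color description stays at $2^m>2R+1$ because each individual generator is still range $R$ and is applied to a point of the same structural type.
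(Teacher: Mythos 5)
Your construction is essentially the paper's: two interleaved copies of the preceding example's shift (the paper uses disjoint alphabets $\{0,1\}$ and $\{2,3\}$ where you share the background symbol $0$, an immaterial difference), with the orbit-wise shifts $\psi_n$ and the level-$n$ color swaps $\tau_n$ playing exactly the roles of the paper's generators of $\Aut(X)\cap[\sigma]$ and its involutions $\delta_m$. Your core uniformity argument---that a range-$R$ block code acts on every period-$2^m$ orbit with $2^m>2R+1$ by the same shift and color permutation it induces on $y_\infty^{(j)}$, and that this persists under composition---is precisely the mechanism of the paper's preceding example, so the proposal is correct and takes the same route.
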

\begin{proof}
Let $X_1$ be the shift of $\{0,1\}^{\Z}$ constructed in the previous example.  Let $X_2$ be the same shift, constructed over the alphabet $\{2,3\}$ (by identifying $0$ with $2$ and $1$ with $3$).  Let $X=X_1\cup X_2$ and observe that $d(X_1,X_2)=1$.  Since $\Aut(X_i)\cap[\sigma]=\Aut(X_i)$ for $i=1,2$, we have $\Aut(X)\cap[\sigma]\cong\Aut(X_1)\times\Aut(X_2)$.  Therefore $\Aut(X)\cap[\sigma]$ is not finitely generated.  On the other hand, 
$$
P_X(n)=P_{X_1}(n)+P_{X_2}(n)=2\cdot P_{X_1}(n)<6n  
$$
so $X$ is a shift of linear growth (and has a dense set of periodic points).  

We claim that $\Aut(x)/\Aut(X)\cap[\sigma]$ is not finitely generated.  Define $\delta\in\Aut(X)$ to be the range $0$ involution that exchanges $0$ with $2$ and $1$ with $3$.  For each $m\in\N$ let $\delta_m\in\Aut(X)$ be the range $0$ involution which exchanges the (unique) orbit of period $2^m$ in $X_1$ with the (unique) orbit of period $2^m$ in $X_2$ by exchanging $0$ with $2$ and $1$ with $3$ in these orbits only (and fixing the remainder of $X$.  For $i\in\N$ let $\tilde{\delta}_i$ be the projection of $\delta_i$ to $\Aut(X)/\Aut(X)\cap[\sigma]$ and let $\tilde{\delta}$ be the projection of $\delta$.  These involutions commute pairwise and the set $\{\tilde{\delta}_i\colon i\in\N\}\cup\{\tilde{\delta}\}$ is clearly independent.  

Now let ${\bf x}\in X_1$ be the point ${\bf x}(i)=1$ if and only if $i=0$, and let ${\bf y}\in X_2$ be the point ${\bf y}(i)=4$ if and only if $i=0$.  Let $\varphi\in\Aut(X)$ be fixed and observe that either $\varphi({\bf x})\in\mathcal{O}({\bf x})$ or $\varphi({\bf x})\in\mathcal{O}({\bf y})$.  In the former case define $\epsilon:=0$ and in the latter case define $\epsilon:=1$, so that $\varphi\circ\delta^{\epsilon}$ preserves the orbit of ${\bf x}$ (hence also the orbit of ${\bf y}$).  As $\varphi\circ\delta^{\epsilon}$ is given by a block code which carries the block of all $0$'s to $0$, there are at most finitely many $m$ such that $\varphi\circ\delta^{\epsilon}$ does not preserve the orbit of the (unique) periodic orbit of period $2^m$ in $X_1$.  Let $m_1<\cdots<m_n$ be the set of $m$ for which it does not preserve the orbit.  Then 
$$
\varphi\circ\delta^{\epsilon}\circ\delta_{m_1}\circ\cdots\circ\delta_{m_n}\in\Aut(X)\cap[\sigma].  
$$
Therefore $\Aut(X)/\Aut(X)\cap[\sigma]$ is the group generated by $\tilde{\delta}, \tilde{\delta}_1, \tilde{\delta}_2, \tilde{\delta}_3,\dots$  This group is isomorphic to $\prod_{i=1}^{\infty}\Z_2$ so $\Aut(X)/\Aut(X)\cap[\sigma]$ is not finitely generated.  Finally, as $\Aut(X)$ factors onto a group that it not finitely generated, it is not finitely generated either.
\end{proof}

\section{Automorphisms of periodic shifts}
\label{sec:periodic2}

We characterize which finite groups arise as automorphism groups of shifts.
\begin{definition}
For $n>1$ and $m\in\N$, let 
$$
\Z_n^m:=\underbrace{\mathbb{Z}_n\times\mathbb{Z}_n\times\cdots\times\mathbb{Z}_n}_{m\text{ times}}  
$$
where $\mathbb{Z}_n$ denotes $\Z/n\Z$.  Let $S_m$ denote the symmetric group on $m$ letters and define a homomorphism $\psi\colon S_m\to\Aut(\Z_n^m)$ by 
$$
\left(\psi(\pi)\right)(i_1,\dots,i_m):=(i_{\pi(1)},\dots,i_{\pi(m)}).  
$$
Then the {\em generalized symmetric group} is defined as in~\cite{Osima} to be  
$$
S(n,m):=\Z_n^m\rtimes_{\psi} S_m.  
$$
Equivalently, $S(n,m)$ is the wreath product $\Z_n\wr S_m$.
\end{definition}

\begin{theorem}
Suppose $G$ is a finite group.  There exists a shift $(X,\sigma)$ for which $\Aut(X)\cong G$ if and only if there exist $s\in\N$, $n_1<n_2<\cdots<n_s$, and $m_1, m_2, \dots,m_s\in\N$ such that 
$$
G\cong S(n_1,m_1)\times S(n_2,m_2)\times\cdots\times S(n_s,m_s).
$$
\end{theorem}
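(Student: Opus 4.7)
The plan has two directions to establish. For the ``only if'' direction, the first observation is that if $\Aut(X)$ is finite then $\sigma$ has finite order in $\Aut(X)$, so there exists $n \in \N$ with $\sigma^n = \Id$ on $X$; this forces every point of $X$ to be periodic with period dividing $n$. In particular $X$ is a finite subset of $\A^\Z$, and since $\A^\Z$ is Hausdorff the subspace topology on $X$ is discrete.

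For such a finite $X$, I would group the $\sigma$-orbits by minimal period: let $n_1 < n_2 < \cdots < n_s$ be the distinct minimal periods that appear in $X$, let $m_i$ be the number of orbits of minimal period $n_i$, and let $X_i$ denote the union of these orbits. Any $\varphi \in \Aut(X)$ commutes with $\sigma$ and is a bijection, so it preserves minimal period and hence restricts to an automorphism of each $X_i$; this yields a direct product decomposition $\Aut(X) \cong \prod_{i=1}^s \Aut(X_i)$. Since each $X_i$ is finite and discrete, a map $X_i \to X_i$ is an automorphism if and only if it is a $\sigma$-equivariant bijection, which is in turn the data of a permutation of the $m_i$ orbits together with an independent choice of cyclic shift on each orbit. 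This group of bijections is precisely $\Z_{n_i} \wr S_{m_i} = S(n_i, m_i)$, and combining gives $\Aut(X) \cong \prod_{i=1}^s S(n_i, m_i)$.

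For the converse, given the data $(n_i, m_i)_{i=1}^s$ with $n_1 < \cdots < n_s$, I would construct $X$ explicitly: choose an alphabet with enough letters to accommodate $\sum_i m_i$ pairwise distinct periodic orbits, $m_i$ of them of minimal period $n_i$ (using, for instance, disjoint sets of letters per orbit to guarantee distinctness and to rule out orbits whose minimal period differs from the intended value), and let $X$ be the union of these orbits. Then $X$ has exactly the prescribed orbit structure and the argument above applied to it yields $\Aut(X) \cong \prod_{i=1}^s S(n_i, m_i)$.

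The one technical point that needs care, and that I expect to be the only real obstacle, is the verification that every $\sigma$-equivariant self-bijection of the finite discrete $X$ is genuinely realized by a sliding block code (so that it is a legitimate element of $\Aut(X)$ in the sense of Curtis--Hedlund--Lyndon, rather than just an abstract set-theoretic bijection). This is handled by choosing a range $R$ large enough that every length-$(2R+1)$ window appearing in $X$ uniquely identifies both the orbit on which it lies and the phase of its center within that orbit; such an $R$ exists because there are only finitely many orbits and each is periodic, and once $R$ is fixed, the desired block code is prescribed case by case on the finitely many admissible windows. Everything else is bookkeeping about direct products and wreath products.
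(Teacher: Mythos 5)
Your proposal is correct and follows essentially the same route as the paper: finiteness of $\Aut(X)$ forces $\sigma$ to have finite order, hence $X$ is a finite union of periodic orbits, automorphisms preserve minimal period and act on each period class as a $\sigma$-equivariant permutation of orbits with independent rotations (giving the wreath products $S(n_i,m_i)$), and the converse is realized by an explicit finite shift with the prescribed orbit structure. The only cosmetic difference is that you package the argument as a direct-product decomposition $\Aut(X)\cong\prod_i\Aut(X_i)$ while the paper writes down the isomorphism to $\prod_i S(n_i,m_i)$ in one step; your worry about realizing equivariant bijections as sliding block codes is resolved exactly as you suggest (and is in any case automatic, since on a finite discrete $X$ every $\sigma$-equivariant bijection is a homeomorphism commuting with $\sigma$).
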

\begin{proof}
Suppose $(X,\sigma)$ is a shift for which $\Aut(X)$ is finite.  Since $\sigma\in\Aut(X)$, there exists $k\in\N$ such that $\sigma^k(x)=x$ for all $x\in X$.  
That is, $X$ is comprised entirely of periodic points such that the minimal period of each point 
is a divisor of $k$.  Since a shift can have only finitely many such points, $X$ is finite.  Let $x_1,\dots,x_N\in X$ be representatives of the orbits in $X$, meaning that $\mathcal{O}(x_i)\cap\mathcal{O}(x_j)=\emptyset$ whenever $i\neq j$ and for all $x\in X$ there exist $i,k\in\N$ such that $x=\sigma^k(x_i)$.  For $i=1, \ldots, N$, let $p_i$ be the minimal period of $x_i$ and, without loss, assume that $p_1\leq p_2\leq\cdots\leq p_N$.  Define $n_1:=p_1$ and inductively define $n_2,n_3,\dots,n_s$ by 
$$
n_{i+1}:=\min\{p_j\colon p_j>n_i\}, 
$$
where $s$ is the number of steps before the construction terminates.  Define 
$$
m_i:=\vert\{j\colon p_j=n_i\}\vert.  
$$
Let $\varphi\in\Aut(X)$.  Then for $1\leq i\leq s$, $\varphi$ induces a permutation on the set of periodic points of minimal period $n_i$.  
More precisely, for fixed $1\leq i\leq s$, we can define $\pi_i^{\varphi}\in S_{m_i}$ to be the permutation that sends $j\in\{1,2,\dots,m_i\}$ to the unique integer $k\in\{1,2,\dots,m_i\}$ such that $\varphi(x_{m_1+\cdots+m_{i-1}+j})\in\mathcal{O}(x_{m_1+\cdots+m_{i-1}+k})$.  For $1\leq j\leq m_i$, choose $k_j^i\in\Z_{n_i}$ such that 
$$
\varphi(x_{m_1+\cdots+m_{i-1}+j})=\sigma^{k_j^i}(x_{m_1+\cdots+m_{i-1}+\pi_i^{\varphi}(j)}).  
$$
Then the map 
$$
\varphi\mapsto(k_1^1,k_2^1,\dots,k_{m_1}^1,\pi_1^{\varphi},k_1^2,\dots,k_{m_2}^2,\pi_2^{\varphi},\dots,k_1^s,\dots,k_{m_s}^s,\pi_s^{\varphi}) 
$$
is a homomorphism from $\Aut(X)$ to $S(n_1,m_1)\times\cdots\times S(n_s,m_s)$.  The kernel of this map is trivial.  
To check that it is surjective,  if $\pi_1,\dots,\pi_s$ are permutations ($\pi_i\in S_{m_i}$ for all $i$), then  define 
$$
\varphi_{\pi_1,\dots,\pi_s}(x_{m_1+\cdots+m_{i-1}+j})=x_{m_1+\cdots+m_{i-1}+\pi_i(j)} 
$$
and extend this to an automorphism of $(X,\sigma)$.  Similarly, for $1\leq i\leq N$, define 
$$
\varphi_i(\sigma^k(x_j)):=\sigma^{k+\delta_{i,j}}(x_j), 
$$
where $\delta_{i,j}$ is the Kronecker delta.  Note that each of these maps is given by 
a block code, where the range is the smallest $R$ such that $\sigma^R(x)=x$ for all $x\in X$.  
Taken together, this shows that the map $\phi$ is surjective and thus is an isomorphism.

Conversely, suppose that $n_1<\cdots<n_s$ are given and $m_1,\dots,m_s\in\N$.  For $1\leq i\leq s$ and $1\leq j\leq m_i$, define 
$$
x_{i,j}(k):=\begin{cases}
j & \text{ if }k\equiv0\text{ (mod $n_i$)}; \\
0 & \text{ otherwise}.  
\end{cases}
$$
Let 
$$
X^{\prime}=\bigcup_{i=1}^s\bigcup_{j=1}^{n_i}x_{i,j} 
$$
and let $X$ be the closure of $X^{\prime}$ under $\sigma$.  Then $X$ consists of periodic points, with precisely $m_i$ distinct orbits of minimal period $n_i$, for $1\leq i\leq s$.  Thus 
\begin{equation*}
\Aut(X)\cong S(n_1,m_1)\times\cdots\times S(n_s,m_s).    \hfill\qedhere
\end{equation*}
\end{proof}

\bigskip
\noindent
{\bf Acknowledgment}:  We thank Jim Davis for pointing us to reference~\cite{SW}.

\end{document}